\newtheorem{Theorem}{Theorem}[section]
\newtheorem{Lemma}[Theorem]{Lemma}
\newtheorem{Proposition}[Theorem]{Proposition}
\newtheorem{Corollary}[Theorem]{Corollary}
\theoremstyle{definition}
\theoremstyle{remark}
\newtheorem{Remark}[Theorem]{Remark}
\def\@thmcountersep{-}
\numberwithin{equation}{section}
\begin{document} 

\title[Conway-Gordon type theorem for $K_{3,3,1,1}$]{Conway-Gordon type theorem for the complete four-partite graph $K_{3,3,1,1}$}

\author{Hiroka Hashimoto}
\address{Division of Mathematics, Graduate School of Science, Tokyo Woman's Christian University, 2-6-1 Zempukuji, Suginami-ku, Tokyo 167-8585, Japan}
\email{etiscatbird@yahoo.co.jp}

\author{Ryo Nikkuni}
\address{Department of Mathematics, School of Arts and Sciences, Tokyo Woman's Christian University, 2-6-1 Zempukuji, Suginami-ku, Tokyo 167-8585, Japan}
\email{nick@lab.twcu.ac.jp}
\thanks{The second author was partially supported by Grant-in-Aid for Young Scientists (B) (No. 21740046), Japan Society for the Promotion of Science.}

\subjclass{Primary 57M15; Secondary 57M25}

\date{}


\keywords{Spatial graph, Intrinsic knottedness, Rectilinear spatial graph}

\begin{abstract}
We give a Conway-Gordon type formula for invariants of knots and links in a spatial complete four-partite graph $K_{3,3,1,1}$ in terms of the square of the linking number and the second coefficient of the Conway polynomial. As an application, we show that every rectilinear spatial $K_{3,3,1,1}$ contains a nontrivial Hamiltonian knot. 
\end{abstract}

\maketitle

\section{Introduction} 

Throughout this paper we work in the piecewise linear category. Let $G$ be a finite graph. An embedding $f$ of $G$ into the Euclidean $3$-space ${\mathbb R}^{3}$ is called a {\it spatial embedding} of $G$ and $f(G)$ is called a {\it spatial graph}. We denote the set of all spatial embeddings of $G$ by ${\rm SE}(G)$. We call a subgraph $\gamma$ of $G$ which is homeomorphic to the circle a {\it cycle} of $G$ and denote the set of all cycles of $G$ by $\Gamma(G)$. We also call a cycle of $G$ a {\it $k$-cycle} if it contains exactly $k$ edges and denote the set of all $k$-cycles of $G$ by $\Gamma_{k}(G)$. In particular, a $k$-cycle is said to be {\it Hamiltonian} if $k$ equals the number of all vertices of $G$. For a positive integer $n$, $\Gamma^{(n)}(G)$ denotes the set of all cycles of $G$ ($=\Gamma(G)$) if $n=1$ and the set of all unions of $n$ mutually disjoint cycles of $G$ if $n\ge 2$. For an element $\gamma$ in $\Gamma^{(n)}(G)$ and an element $f$ in ${\rm SE}(G)$, $f(\gamma)$ is none other than a knot in $f(G)$ if $n=1$ and an $n$-component link in $f(G)$ if $n\ge 2$. In particular, we call $f(\gamma)$ a {\it Hamiltonian knot} in $f(G)$ if $\gamma$ is a Hamiltonian cycle. 

For an edge $e$ of a graph $G$, we denote the subgraph $G\setminus {\rm int}e$ by $G-e$. Let $e=\overline{uv}$ be an edge of $G$ which is not a loop, where $u$ and $v$ are distinct end vertices of $e$. Then we call the graph which is obtained from $G-e$ by identifying $u$ and $v$ the {\it edge contraction of $G$ along $e$} and denote it by $G/e$. A graph $H$ is called a {\it minor} of a graph $G$ if there exists a subgraph $G'$ of $G$ and the edges $e_{1},e_{2},\ldots,e_{m}$ of $G'$ each of which is not a loop such that $H$ is obtained from $G'$ by a sequence of edge contractions along $e_{1},e_{2},\ldots,e_{m}$. A minor $H$ of $G$ is called a {\it proper minor} if $H$ does not equal $G$. Let ${\mathcal P}$ be a property of graphs which is {\it closed} under minor reductions; that is, for any graph $G$ which does not have ${\mathcal P}$, all minors of $G$ also do not have ${\mathcal P}$. A graph $G$ is said to be {\it minor-minimal} with respect to ${\mathcal P}$ if $G$ has ${\mathcal P}$ but all proper minors of $G$ do not have ${\mathcal P}$. Then it is known that there exist finitely many minor-minimal graphs with respect to ${\mathcal P}$ \cite{RS04}.

Let $K_{m}$ be the {\it complete graph} on $m$ vertices, namely the simple graph consisting of $m$ vertices in which every pair of distinct vertices is connected by exactly one edge. Then the following are very famous in spatial graph theory, which are called the Conway-Gordon theorems. 

\begin{Theorem}\label{CG2} {\rm (Conway-Gordon \cite{CG83})}
\begin{enumerate}
\item For any element $f$ in ${\rm SE}(K_{6})$, 
\begin{eqnarray}\label{CG_f1}
\sum_{\gamma\in \Gamma^{(2)}(K_{6})}{\rm lk}(f(\gamma)) \equiv 1 \pmod{2}, 
\end{eqnarray}
where ${\rm lk}$ denotes the {\it linking number}.

\item For any element $f$ in ${\rm SE}(K_{7})$, 
\begin{eqnarray}\label{CG_f2}
\sum_{\gamma\in \Gamma_{7}(K_{7})}a_{2}(f(\gamma)) \equiv 1 \pmod{2}, 
\end{eqnarray}
where $a_{2}$ denotes the second coefficient of the {\it Conway polynomial}.

\end{enumerate}
\end{Theorem}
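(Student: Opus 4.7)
The plan is to prove each assertion in two steps. First, I would establish that the mod-$2$ value of the indicated sum does not depend on the spatial embedding $f$. Second, I would evaluate the sum on one convenient embedding. Any two spatial embeddings of a finite graph $G$ can be connected by a finite sequence of ambient isotopies together with crossing changes performed on a fixed regular diagram, so the invariance step reduces to comparing $\omega(f)$ and $\omega(f')$ when $f$ and $f'$ differ by a single crossing change between two (not necessarily distinct) edges $e_{1},e_{2}$ of $G$.

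For (1), set $\omega_{1}(f)=\sum_{\gamma \in \Gamma^{(2)}(K_{6})}{\rm lk}(f(\gamma))$. Writing $\gamma=\gamma_{1}\cup\gamma_{2}$, a crossing change between $f(e_{1})$ and $f(e_{2})$ alters ${\rm lk}(f(\gamma))$ by $\pm 1$ precisely when $e_{1}\subset\gamma_{1}$ and $e_{2}\subset\gamma_{2}$; self-crossings and crossings between edges sharing a vertex contribute $0$. For disjoint $e_{1}=\overline{ab}$ and $e_{2}=\overline{cd}$, every element of $\Gamma^{(2)}(K_{6})$ consists of two disjoint triangles, so with $u,v$ denoting the remaining vertices there are exactly two pairs separating $e_{1}$ and $e_{2}$ (the triangle through $e_{1}$ may take $u$ or $v$ as its third vertex, and the companion is then forced). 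Since $2\equiv 0\pmod 2$, $\omega_{1}(f)\pmod 2$ is embedding-independent. Evaluating on a specific embedding of $K_{6}$ whose diagram contains a single Hopf sublink and all other sublinks trivial then yields $\omega_{1}(f)\equiv 1\pmod 2$.

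For (2), set $\omega_{2}(f)=\sum_{\gamma \in \Gamma_{7}(K_{7})}a_{2}(f(\gamma))$ and use the Vassiliev skein relation $a_{2}(L_{+})-a_{2}(L_{-})={\rm lk}(L_{0})$. A crossing change between edges $e_{1}$ and $e_{2}$ changes $a_{2}(f(\gamma))$ by $\pm{\rm lk}(L_{0}^{\gamma})$ for each Hamiltonian cycle $\gamma$ containing both edges, where $L_{0}^{\gamma}$ is the two-component link obtained by the oriented smoothing at the crossing. The key is to identify ${\rm lk}(L_{0}^{\gamma})$, via a local isotopy that pulls the smoothed half-edges straight, with the linking number of a specific element of $\Gamma^{(2)}(K_{7})$ determined by $\gamma$, $e_{1}$, and $e_{2}$, and then to show by a vertex-counting argument, analogous in spirit to the $K_{6}$ case, that the total sum $\sum_{\gamma}\pm{\rm lk}(L_{0}^{\gamma})$ is even. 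This gives the desired embedding-invariance of $\omega_{2}(f)\pmod 2$, and direct evaluation on a standard embedding of $K_{7}$ completes the proof.

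The main obstacle is the combinatorial bookkeeping in (2). In $K_{6}$ every element of $\Gamma^{(2)}$ has the same combinatorial type, namely two disjoint triangles, so the parity count is essentially immediate. In $K_{7}$, however, $\Gamma^{(2)}(K_{7})$ contains both pairs of disjoint triangles and pairs consisting of a triangle and a quadrilateral; the number of Hamiltonian cycles through a prescribed pair of edges depends on whether those edges are adjacent or disjoint; and the identification of $L_{0}^{\gamma}$ with a genuine sublink of $f(K_{7})$ must be done carefully enough to keep the signs under control. Once this combinatorial identification is in place, the remaining parity count and the one-off evaluation on a fixed embedding of $K_{7}$ are routine.
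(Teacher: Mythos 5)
First, a remark on the comparison itself: the paper offers no proof of Theorem \ref{CG2} --- it is quoted from Conway--Gordon \cite{CG83} and used as a black box --- so the only thing to measure your argument against is the original proof, which follows exactly the strategy you describe (invariance of the sum modulo $2$ under crossing changes, then evaluation on one convenient embedding). Your part (1) is complete and correct: every element of $\Gamma^{(2)}(K_{6})$ is a pair of disjoint triangles, a crossing change alters a linking number only when its two strands lie in different components, your count of exactly two separating pairs for two disjoint edges is right, and the standard embedding of $K_{6}$ with a single Hopf sublink finishes the computation.

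Part (2), however, contains a genuine gap: the entire content of the theorem sits in the step you defer, namely showing that $\sum_{\gamma}\pm{\rm lk}(L_{0}^{\gamma})$ is even, and two concrete obstacles stand between your outline and a proof. First, the reduction that disposed of self-crossings and adjacent-edge crossings in part (1) does not carry over: such crossing changes leave linking numbers alone but do change $a_{2}$ of the Hamiltonian cycles passing through the edges involved, so these cases need their own parity argument (ultimately resting on the fact that the numbers of Hamiltonian cycles of $K_{7}$ through a fixed edge, a fixed path of length two, or a fixed pair of disjoint edges are $120$, $24$ and $48$ respectively, all even); your proposal is silent on this. Second, the components of $L_{0}^{\gamma}$ are not cycles of $K_{7}$: each one consists of an arc of $\gamma$ capped off by half of $e_{1}$ and half of $e_{2}$, so identifying ${\rm lk}(L_{0}^{\gamma})$ with the linking number of a genuine element of $\Gamma^{(2)}(K_{7})$ forces you to splice in new edges (say $\overline{ac}$ and $\overline{bd}$), and the resulting correction terms must be shown to cancel modulo $2$ after summing over all Hamiltonian cycles through $e_{1}$ and $e_{2}$. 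This bookkeeping --- sorting the smoothed links by which of the three remaining vertices land on which arc, and verifying that each genuine two-component link of $K_{7}$ is picked up an even number of times except for a controlled family --- is precisely where Conway and Gordon's proof lives, and it is announced rather than carried out in your proposal. In short: correct strategy, part (1) proved, part (2) still a plan.
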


A graph is said to be {\it intrinsically linked} if for any element $f$ in ${\rm SE}(G)$, there exists an element $\gamma$ in $\Gamma^{(2)}(G)$ such that $f(\gamma)$ is a nonsplittable $2$-component link, and to be {\it intrinsically knotted} if for any element $f$ in ${\rm SE}(G)$, there exists an element $\gamma$ in $\Gamma(G)$ such that $f(\gamma)$ is a nontrivial knot. Theorem \ref{CG2} implies that $K_{6}$ (resp. $K_{7}$) is intrinsically linked (resp. knotted). Moreover, the intrinsic linkedness (resp. knottedness) is closed under minor reductions \cite{NT85} (resp. \cite{FL88}), and $K_{6}$ (resp. $K_{7}$) is minor-minimal with respect to the intrinsically linkedness \cite{S84} (resp. knottedness \cite{MRS88}). 

A {\it $\triangle Y$-exchange} is an operation to obtain a new graph $G_{Y}$ from a graph $G_{\triangle}$ by removing all edges of a $3$-cycle $\triangle$ of $G_{\triangle}$ with the edges $\overline{uv},\overline{vw}$ and $\overline{wu}$, and adding a new vertex $x$ and connecting it to each of the vertices $u,v$ and $w$ as illustrated in Fig. \ref{Delta-Y} (we often denote $\overline{ux}\cup \overline{vx}\cup \overline{wx}$ by $Y$). A {\it $Y \triangle$-exchange} is the reverse of this operation. We call the set of all graphs obtained from a graph $G$ by a finite sequence of $\triangle Y$ and $Y \triangle$-exchanges the {\it $G$-family} and denote it by ${\mathcal F}(G)$. In particular, we denote the set of all graphs obtained from $G$ by a finite sequence of $\triangle Y$-exchanges by ${\mathcal F}_{\triangle}(G)$. For example, it is well known that the $K_{6}$-family consists of exactly seven graphs as illustrated in Fig. \ref{Petersen}, where an arrow between two graphs indicates the application of a single $\triangle Y$-exchange. Note that ${\mathcal F}_{\triangle}(K_{6})={\mathcal F}(K_{6}) \setminus \left\{P_{7}\right\}$. Since $P_{10}$ is isomorphic to the {\it Petersen graph}, the $K_{6}$-family is also called the {\it Petersen family}. It is also well known that the $K_{7}$-family consists of exactly twenty graphs, and there exist exactly six graphs in the $K_{7}$-family each of which does not belong to ${\mathcal F}_{\triangle}(K_{7})$. Then the intrinsic linkedness and the intrinsic knottedness behave well under $\triangle Y$-exchanges as follows. 

\begin{Proposition}\label{MRS} 
{\rm (Sachs \cite{S84})} 
\begin{enumerate}
\item If $G_{\triangle}$ is intrinsically linked, then $G_{Y}$ is also intrinsically linked. 
\item If $G_{\triangle}$ is intrinsically knotted, then $G_{Y}$ is also intrinsically knotted. 
\end{enumerate}
\end{Proposition}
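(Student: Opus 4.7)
The plan is to prove both parts by contrapositive. I assume some $f_{Y}\in{\rm SE}(G_{Y})$ contains no nonsplittable $2$-component link (resp.\ no nontrivial knot), and construct from $f_{Y}$ an embedding $f_{\triangle}\in{\rm SE}(G_{\triangle})$ with the same property, contradicting intrinsic linkedness (resp.\ knottedness) of $G_{\triangle}$.

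The construction of $f_{\triangle}$ is local near $x$. Take a small ball $B$ around $f_{Y}(x)$ meeting $f_{Y}(G_{Y})$ only in the three arcs $f_{Y}(\overline{ux})\cap B$, $f_{Y}(\overline{vx})\cap B$, $f_{Y}(\overline{wx})\cap B$. Pick points $p_{u}, p_{v}, p_{w}$ on these arcs near $f_{Y}(x)$ and join them pairwise by a small triangular circuit inside $B$. Deleting $f_{Y}(Y)\cap B$ and perturbing slightly gives three composite arcs $f_{\triangle}(\overline{uv}), f_{\triangle}(\overline{vw}), f_{\triangle}(\overline{wu})$ (each concatenating parts of two original Y-arcs with one new side) that are pairwise disjoint away from $\{u,v,w\}$; outside $B$ set $f_{\triangle}=f_{Y}$.

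The key step is a cycle correspondence. For any cycle $\gamma$ of $G_{\triangle}$, let $k\in\{0,1,2,3\}$ be the number of edges of $\triangle=\overline{uv}\cup\overline{vw}\cup\overline{wu}$ contained in $\gamma$. When $k=0$, $\gamma$ is already a cycle of $G_{Y}$ and $f_{\triangle}(\gamma)=f_{Y}(\gamma)$. When $k=1$, say $\gamma$ contains $\overline{uv}$, replacing this edge by $\overline{ux}\cup\overline{xv}$ produces a cycle $\tilde{\gamma}$ of $G_{Y}$ with $f_{\triangle}(\gamma)$ ambient isotopic to $f_{Y}(\tilde{\gamma})$ via an isotopy supported in $B$. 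When $k=2$, say $\gamma$ contains $\overline{uv}$ and $\overline{vw}$, replacing these edges by the shortcut $\overline{ux}\cup\overline{xw}$ (skipping $v$) yields a cycle $\tilde{\gamma}$ of $G_{Y}$; the resulting hairpin of $f_{\triangle}(\gamma)$ alongside $f_{Y}(\overline{xv})$ and visiting $v$ can be retracted along a thin tube about $f_{Y}(\overline{xv})$ down to $p_{v}$, producing an ambient isotopy to $f_{Y}(\tilde{\gamma})$. When $k=3$, $\gamma=\triangle$, and retracting all three hairpins shows that $f_{\triangle}(\triangle)$ is isotopic to the small triangle $p_{u}p_{v}p_{w}$ inside $B$, which is unknotted.

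For (1), any two edges of $\triangle$ share a vertex, so in a disjoint pair $\gamma^{(1)}\cup\gamma^{(2)}$ at most one component can carry a $\triangle$-edge; the correspondence above converts this pair to a disjoint pair in $G_{Y}$ whose $f_{Y}$-image is ambient isotopic as a link to $f_{\triangle}(\gamma^{(1)}\cup\gamma^{(2)})$, so nonsplittability is preserved. For (2), every knot in $f_{\triangle}(G_{\triangle})$ is either trivial (case $k=3$) or ambient isotopic to a knot in $f_{Y}(G_{Y})$ (cases $k\le 2$), so nontriviality is preserved. The delicate step, and the main obstacle, is the $k=2$ hairpin retraction: one must verify that the tubular neighborhood of $f_{Y}(\overline{xv})$ used for the isotopy is disjoint from every other cycle in play and from every other edge incident to $v$, which is guaranteed by choosing $B$ and the tube small enough, together with the fact that $f_{Y}(\overline{xv})$ meets the rest of $f_{Y}(G_{Y})$ only at its endpoints.
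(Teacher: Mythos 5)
Your construction is the standard one, and it is essentially the machinery this paper itself sets up in Section 3: the paper does not prove Proposition \ref{MRS} (it cites Sachs), but your map $f_{Y}\mapsto f_{\triangle}$ is the map $\varphi:{\rm SE}(G_{Y})\to{\rm SE}(G_{\triangle})$ of Section 3 (there realized more cleanly by taking a disk $D$ with $D\cap f(G_{Y})=f(Y)$ and setting $f_{\triangle}(\triangle)=\partial D$, which avoids your need to perturb apart the doubled leg arcs), and your $k$-case analysis is exactly the cycle correspondence $\Phi$ together with Proposition \ref{map}. The contrapositive argument and the observation that at most one component of a disjoint pair can meet $\triangle$ are both correct.

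One case is genuinely missing from your part (1): the component carrying $\triangle$-edges may be $\triangle$ itself, i.e.\ $\lambda=\triangle\cup\gamma^{(2)}$ with $\gamma^{(2)}$ disjoint from $\{u,v,w\}$. Here there is no corresponding pair in $G_{Y}$ ($\Phi$ is only defined on $\Gamma(G_{\triangle})\setminus\{\triangle\}$), so "the correspondence converts this pair to a disjoint pair in $G_{Y}$" does not apply, and splittability must be argued directly: after retracting the three hairpins, $f_{\triangle}(\triangle)$ becomes the small triangle $p_{u}p_{v}p_{w}$, which bounds a disk inside $B$, and $B$ together with the three retraction tubes can be chosen disjoint from $f_{\triangle}(\gamma^{(2)})=f_{Y}(\gamma^{(2)})$ since $\gamma^{(2)}$ avoids $u,v,w$; hence the link is split. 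With that case added the proof is complete. (Also note that in your $k=2$ isotopy the tube around $f_{Y}(\overline{xv})$ need only avoid the \emph{other component of the link under consideration}, not every other edge at $v$; an ambient isotopy of a knot or link is free to pass through the remainder of the spatial graph.)
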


\begin{figure}[htbp]
      \begin{center}
\scalebox{0.45}{\includegraphics*{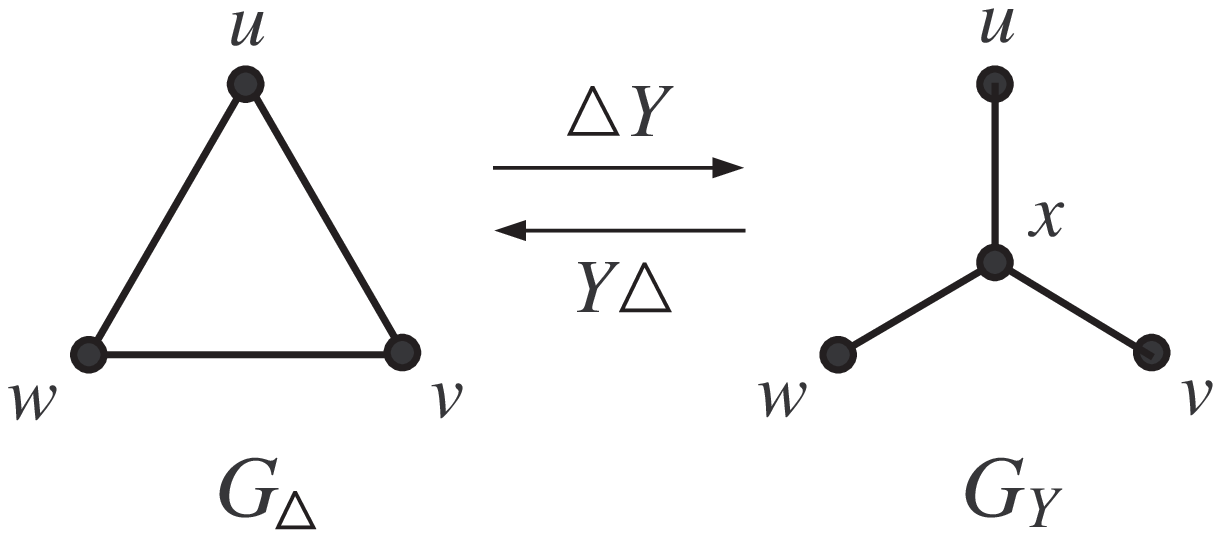}}
      \end{center}
   \caption{}
  \label{Delta-Y}
\end{figure} 
\begin{figure}[htbp]
      \begin{center}
\scalebox{0.41}{\includegraphics*{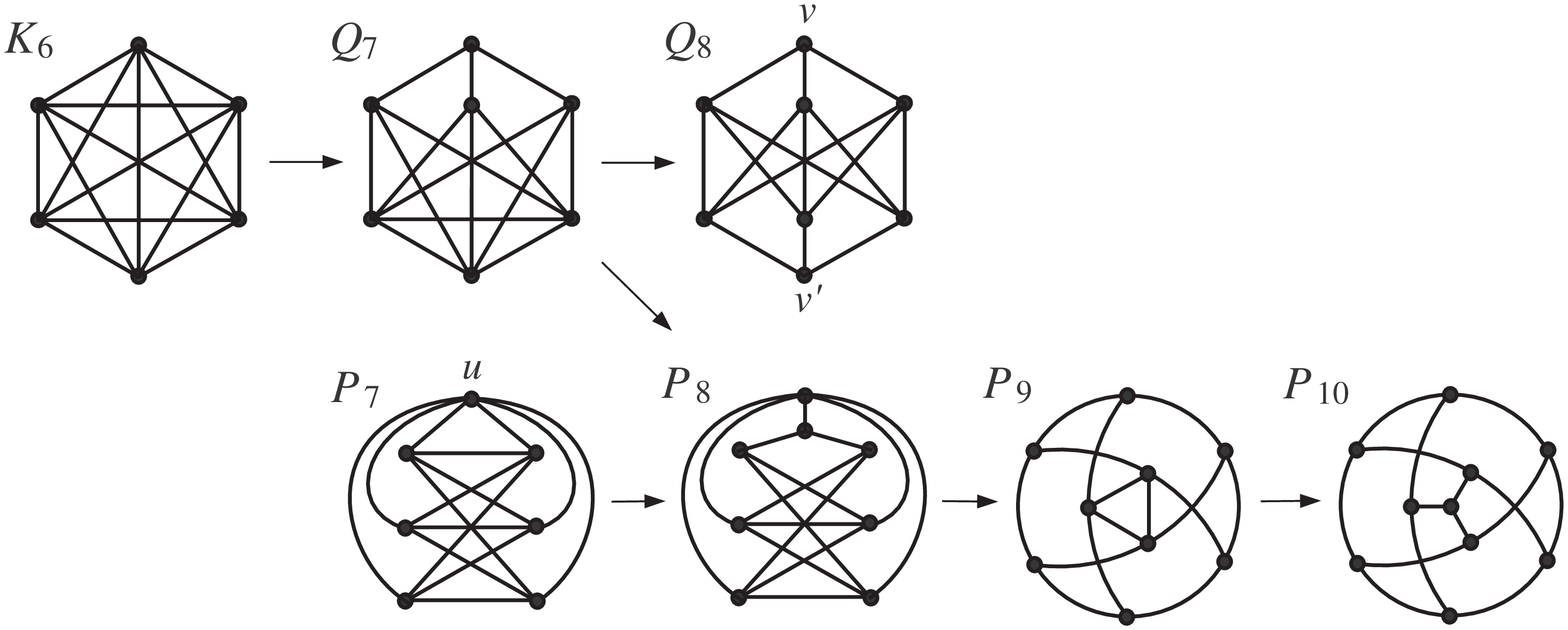}}
      \end{center}
   \caption{}
  \label{Petersen}
\end{figure} 

Proposition \ref{MRS} implies that any element in ${\mathcal F}_{\triangle}(K_{6})$ (resp. ${\mathcal F}_{\triangle}(K_{7})$) is intrinsically linked (resp. knotted). In particular, Robertson-Seymour-Thomas showed that the set of all minor-minimal intrinsically linked graphs equals the $K_{6}$-family, so the converse of Proposition \ref{MRS} (1) is also true \cite{RST95}. On the other hand, it is known that any element in ${\mathcal F}_{\triangle}(K_{7})$ is minor-minimal with respect to the intrinsic knottedness \cite{KS92}, but any element in ${\mathcal F}(K_{7}) \setminus {\mathcal F}_{\triangle}(K_{7})$ is not intrinsically knotted \cite{FN08}, \cite{HNTY10}, \cite{GMN}, so the converse of Proposition \ref{MRS} (2) is not true. Moreover, there exists a minor-minimal intrinsically knotted graph which does not belong to ${\mathcal F}_{\triangle}(K_{7})$ as follows. Let $K_{n_{1},n_{2},\ldots,n_{m}}$ be the {\it complete $m$-partite graph}, namely the simple graph whose vertex set can be decomposed into $m$ mutually disjoint nonempty sets $V_{1},V_{2},\ldots,V_{m}$ where the number of elements in $V_{i}$ equals $n_{i}$ such that no two vertices in $V_{i}$ are connected by an edge and every pair of vertices in the distinct sets $V_{i}$ and $V_{j}$ is connected by exactly one edge, see Fig. \ref{K331K3311} which illustrates $K_{3,3}$, $K_{3,3,1}$ and $K_{3,3,1,1}$. Note that $K_{3,3,1}$ is isomorphic to $P_{7}$ in the $K_{6}$-family, namely $K_{3,3,1}$ is a minor-minimal intrinsically linked graph. On the other hand, Motwani-Raghunathan-Saran claimed in \cite{MRS88} that it may be proven that $K_{3,3,1,1}$ is intrinsically knotted by using the same technique of Theorem \ref{CG2}, namely, by showing that for any element in ${\rm SE}(K_{3,3,1,1})$, the sum of $a_{2}$ over all of the Hamiltonian knots is always congruent to one modulo two. But Kohara-Suzuki showed in \cite{KS92} that the claim did not hold; that is, the sum of $a_{2}$ over all of the Hamiltonian knots is dependent to each element in ${\rm SE}(K_{3,3,1,1})$. Actually, they demonstrated the specific two elements $f_{1}$ and $f_{2}$ in ${\rm SE}(K_{3,3,1,1})$ as illustrated in Fig. \ref{K3311emb}. Here $f_{1}(K_{3,3,1,1})$ contains exactly one nontrivial knot $f_{1}(\gamma_{0})$ ($=$ a trefoil knot, $a_{2}=1$) which is drawn by bold lines, where $\gamma_{0}$ is an element in $\Gamma_{8}(K_{3,3,1,1})$, and $f_{2}(K_{3,3,1,1})$ contains exactly two nontrivial knots $f_{2}(\gamma_{1})$ and $f_{2}(\gamma_{2})$ ($=$ two trefoil knots) which are drawn by bold lines, where $\gamma_{1}$ and $\gamma_{2}$ are elements in $\Gamma_{8}(K_{3,3,1,1})$. Thus the situation of the case of $K_{3,3,1,1}$ is different from the case of $K_{7}$. By using another technique different from Conway-Gordon's, Foisy proved the following. 

\begin{figure}[htbp]
      \begin{center}
\scalebox{0.45}{\includegraphics*{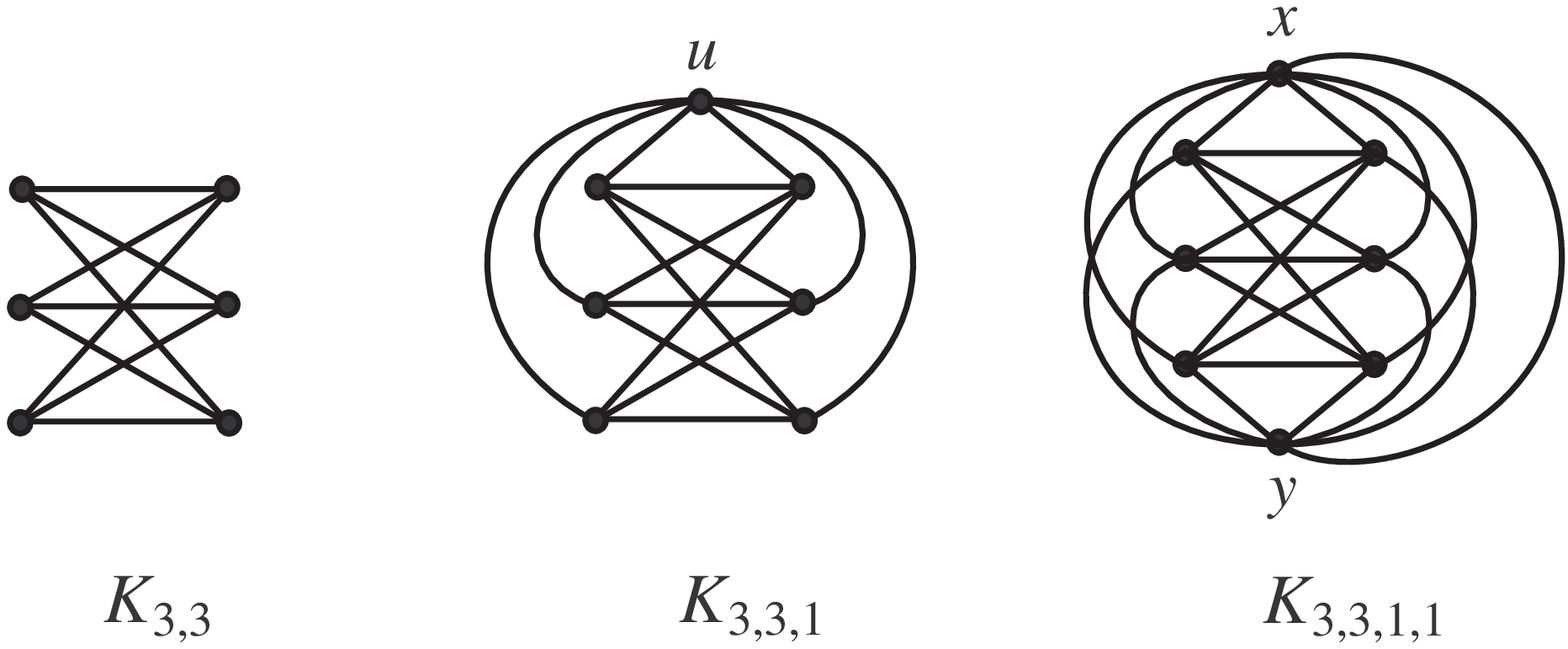}}
      \end{center}
   \caption{}
  \label{K331K3311}
\end{figure} 
\begin{figure}[htbp]
      \begin{center}
\scalebox{0.45}{\includegraphics*{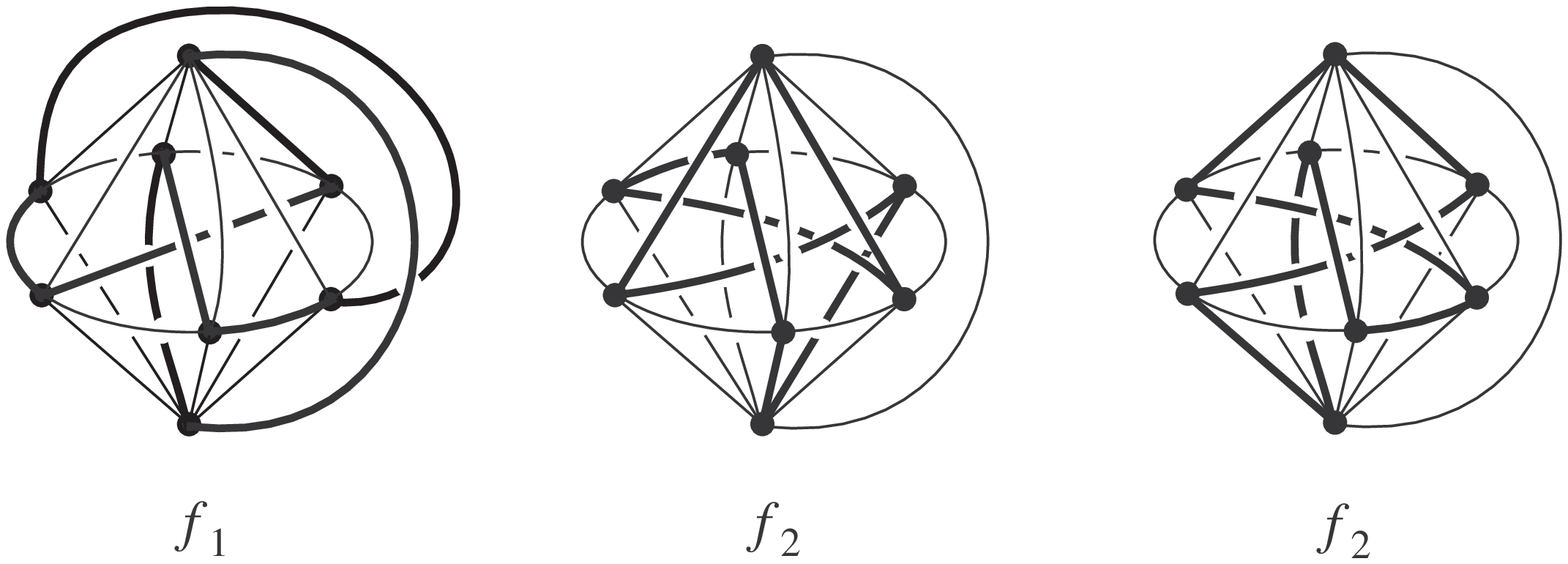}}
      \end{center}
   \caption{}
  \label{K3311emb}
\end{figure} 

\begin{Theorem}\label{ik_k3311}
{\rm (Foisy \cite{F02})} For any element $f$ in ${\rm SE}(K_{3,3,1,1})$, there exists an element $\gamma$ in $\cup_{k=4}^{8}\Gamma_{k}(K_{3,3,1,1})$ such that $a_{2}(f(\gamma))\equiv 1\pmod{2}$. 
\end{Theorem}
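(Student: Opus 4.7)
The plan is to leverage the intrinsic linkedness of the two $K_{3,3,1}$-subgraphs of $K_{3,3,1,1}$ obtained by removing each of the apex vertices, and convert that linking information into $a_{2}$-information via a band-sum identity through the missing apex. Label the vertex classes of $K_{3,3,1,1}$ as $V_{1}=\{a_{1},a_{2},a_{3}\}$, $V_{2}=\{b_{1},b_{2},b_{3}\}$, $V_{3}=\{c\}$, $V_{4}=\{d\}$. Then $G_{c}:=K_{3,3,1,1}-d$ and $G_{d}:=K_{3,3,1,1}-c$ are each isomorphic to $K_{3,3,1}\cong P_{7}$ in the Petersen family and are therefore intrinsically linked; by the Sachs-type formula for $P_{7}$ one has
\[
\sum_{\lambda\in\Gamma^{(2)}(K_{3,3,1})}\mathrm{lk}(f(\lambda))^{2}\equiv 1\pmod{2}
\]
on each of $f|_{G_{c}}$ and $f|_{G_{d}}$. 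Note that in $K_{3,3,1}$ every $3$-cycle passes through the unique apex, so every pair in $\Gamma^{(2)}(K_{3,3,1})$ consists of a $3$-cycle and a $4$-cycle, with total length exactly $7$.

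For each pair $\lambda=\lambda_{1}\sqcup\lambda_{2}\in\Gamma^{(2)}(G_{c})$, I would construct a Hamiltonian $8$-cycle $\gamma$ of $K_{3,3,1,1}$ by excising one edge of $\lambda_{1}$ and one of $\lambda_{2}$, inserting a length-$2$ path through the apex $d$, and adding one closing edge; suitable cut-edges and closing edge are always available because the only non-edges of $K_{3,3,1,1}$ lie inside $V_{1}$ or inside $V_{2}$. A mod-$2$ band-sum identity should then yield
\[
a_{2}(f(\gamma))\equiv a_{2}(f(\lambda_{1}))+a_{2}(f(\lambda_{2}))+\mathrm{lk}(f(\lambda))^{2}\pmod{2}.
\]
Now argue by contradiction: suppose $a_{2}(f(\gamma))$ is even for every cycle $\gamma$ of $K_{3,3,1,1}$ of length $4$ through $8$. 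The $4$-cycle summands $\lambda_{2}$ then have even $a_{2}$ by hypothesis. The $3$-cycle summands $\lambda_{1}$ require an additional step---since an arbitrary PL triangle may carry nonzero $a_{2}$---so I would apply a second band-sum, now through the other apex $c$, replacing $\lambda_{1}$ by a $4$- or $5$-cycle of $K_{3,3,1,1}$ that contains $\lambda_{1}$ together with a detour through $c$, and iterate the identity until every summand on the right is an $a_{2}$-value of a cycle of length in $\{4,\dots,8\}$. Combining the resulting identities would force $\mathrm{lk}(f(\lambda))^{2}\equiv 0\pmod{2}$ for every $\lambda\in\Gamma^{(2)}(G_{c})\cup\Gamma^{(2)}(G_{d})$, contradicting the Sachs formula.

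The main obstacle is the mod-$2$ band-sum identity itself. The classical identity $a_{2}(K)=a_{2}(L_{1})+a_{2}(L_{2})+\mathrm{lk}(L_{1},L_{2})^{2}$ for a band-sum holds cleanly only when the band is untwisted and disjoint from the rest of the link; otherwise there are correction terms coming from the band's writhe and from crossings of the band with $L_{1}\cup L_{2}$. Executing the plan rigorously requires identifying these corrections in the graph setting---likely as further linking-squared terms or $a_{2}$-values of other cycles of $K_{3,3,1,1}$ already controlled by the standing hypothesis---and verifying, through a case analysis over the (small) orbit set $\Gamma^{(2)}(K_{3,3,1})/\mathrm{Aut}(K_{3,3,1})$, that the cumulative right-hand side of the final identity is odd. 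The combinatorial bookkeeping of cut-edges, closing edges and detours through $c$ is where the technical difficulty concentrates.
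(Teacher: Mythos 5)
The central step of your argument---the congruence $a_{2}(f(\gamma))\equiv a_{2}(f(\lambda_{1}))+a_{2}(f(\lambda_{2}))+{\rm lk}(f(\lambda))^{2}\pmod{2}$ for a single band sum through the apex $d$---is not a known identity with correction terms waiting to be tabulated; it is false, and the failure is exactly the phenomenon your proof would need to exclude. A band sum of the two components of a Hopf link can be the unknot, giving $0\not\equiv 0+0+1$, and the discrepancy depends on the isotopy class of the band (here: on the choice of cut edges, closing edge, and the routing of the two arcs through $d$), which is extra geometric data not expressible in terms of linking numbers and $a_{2}$-values of cycles of the graph, hence not controlled by your standing hypothesis. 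This is an obstruction of principle rather than bookkeeping: your contradiction argument attempts to derive knotting from a \emph{single} odd-linking pair in $G_{c}\cong P_{7}$, but one such pair, however it is banded, can always close up to an unknot; correspondingly, $K_{3,3,1}\cong P_{7}$ is intrinsically linked yet not intrinsically knotted. Iterating a second banding through $c$ does not remove the band-dependence, it compounds it. Foisy's actual proof of Theorem \ref{ik_k3311} in \cite{F02} requires \emph{two} interleaved pairs of disjoint cycles, both with odd linking number, arranged in a $D_{4}$-configuration; it is the second odd-linking pair that rigidifies the ``band'' and forces an odd $a_{2}$ on a derived cycle, and the substance of the proof is showing that every spatial $K_{3,3,1,1}$ contains such a doubly linked configuration, using both apexes simultaneously. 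That is the idea missing from your outline.

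It is also worth noting that the present paper does not reprove Foisy's mod-$2$ statement: Theorem \ref{ik_k3311} is quoted, and what the paper proves instead is the stronger integral inequality of Corollary \ref{main_theorem_k3311_cor}, obtained by summing the exact integral Conway--Gordon formulas for $P_{7}$ and $Q_{8}$ (Theorem \ref{petersen_refine_pq}) over the whole family of subgraphs $F_{x}^{(ij)}$, $F_{y}^{(ij)}$, $Q_{8}^{(1)}$, $Q_{8}^{(2)}$, $F_{x}^{(k)}$, $F_{y}^{(k)}$ of $K_{3,3,1,1}$. Averaging over all these routings through the apexes is precisely the mechanism that makes the band-dependent contributions cancel, leaving only ${\rm lk}^{2}$ and $a_{2}$ terms. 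If you wish to salvage your approach, that is the pattern to emulate: never apply a banding relation to one pair and one band in isolation, but sum an exact formula over a complete family of subgraphs so that the uncontrolled terms appear with coefficients that cancel.
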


Theorem \ref{ik_k3311} implies that $K_{3,3,1,1}$ is intrinsically knotted. Moreover, Proposition \ref{MRS} (2) and Theorem \ref{ik_k3311} implies that any element $G$ in ${\mathcal F}_{\triangle}(K_{3,3,1,1})$ is also intrinsically knotted. It is known that there exist exactly twenty six elements in ${\mathcal F}_{\triangle}(K_{3,3,1,1})$. Since Kohara-Suzuki pointed out that each of the proper minors of $G$ is not intrinsically knotted \cite{KS92}, it follows that any element in ${\mathcal F}_{\triangle}(K_{3,3,1,1})$ is minor-minimal with respect to the intrinsic knottedness. Note that a $\triangle Y$-exchange does not change the number of edges of a graph. Since $K_{7}$ and $K_{3,3,1,1}$ have different numbers of edges, the families ${\mathcal F}_{\triangle}(K_{7})$ and ${\mathcal F}_{\triangle}(K_{3,3,1,1})$ are disjoint. 

Our first purpose in this article is to refine Theorem \ref{ik_k3311} by giving a kind of Conway-Gordon type formula for $K_{3,3,1,1}$ not over ${\mathbb Z}_{2}$, but integers ${\mathbb Z}$. In the following, $\Gamma_{k,l}^{(2)}(G)$ denotes the set of all unions of two disjoint cycles of a graph $G$ consisting of a $k$-cycle and an $l$-cycle, and $x$ and $y$ denotes the two vertices of $K_{3,3,1,1}$ with valency seven. Then we have the following.

\begin{Theorem}\label{main_theorem_k3311}
\begin{enumerate}
\item For any element $f$ in ${\rm SE}(K_{3,3,1,1})$, 
\begin{eqnarray}\label{maineq}
&&4\sum_{\gamma\in \Gamma_{8}(K_{3,3,1,1})}a_{2}(f(\gamma))
-4\sum_{\substack{{\gamma\in \Gamma_{7}(K_{3,3,1,1})} \\ {\left\{x,y\right\}\not\subset \gamma}}}a_{2}(f(\gamma))\\
&&-4\sum_{\gamma\in \Gamma_{6}'}a_{2}(f(\gamma))
-4\sum_{\substack{{\gamma\in \Gamma_{5}(K_{3,3,1,1})} \\ {\left\{x,y\right\}\not\subset \gamma}}}a_{2}(f(\gamma))\nonumber \\
&=&
\sum_{\lambda\in \Gamma_{3,5}^{(2)}(K_{3,3,1,1})}{\rm lk}(f(\lambda))^{2}
+2\sum_{\lambda\in \Gamma_{4,4}^{(2)}(K_{3,3,1,1})}{\rm lk}(f(\lambda))^{2}
-18, \nonumber
\end{eqnarray}
where $\Gamma_{6}'$ is a specific proper subset of $\Gamma_{6}(K_{3,3,1,1})$ which does not depend on $f$, see (\ref{gamma6}). 

\item For any element $f$ in ${\rm SE}(K_{3,3,1,1})$, 
\begin{eqnarray}\label{main_ineq}
\sum_{\lambda\in \Gamma_{3,5}^{(2)}(K_{3,3,1,1})}{\rm lk}(f(\lambda))^{2}
+2\sum_{\lambda\in \Gamma_{4,4}^{(2)}(K_{3,3,1,1})}{\rm lk}(f(\lambda))^{2}
\ge 22. 
\end{eqnarray}
\end{enumerate}
\end{Theorem}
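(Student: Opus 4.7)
The proof of Part (1) is a crossing-change invariance computation in the spirit of Conway and Gordon. Suppose $f_{0}, f_{1} \in {\rm SE}(K_{3,3,1,1})$ differ by a single crossing change between two edges $e$ and $e'$. By the skein relation for the Conway polynomial,
$$
a_{2}(f_{1}(\gamma)) - a_{2}(f_{0}(\gamma)) = \varepsilon\,{\rm lk}(f_{0}(\gamma' \cup \gamma''))
$$
for each cycle $\gamma$ containing both $e$ and $e'$, where $\gamma' \cup \gamma''$ is the smoothing of $\gamma$ at the crossing and $\varepsilon = \pm 1$ records its sign; likewise
$$
{\rm lk}(f_{1}(\lambda))^{2} - {\rm lk}(f_{0}(\lambda))^{2} = 2\varepsilon\,{\rm lk}(f_{0}(\lambda)) + 1
$$
for each pair of disjoint cycles $\lambda = \lambda_{1} \cup \lambda_{2}$ with $e \in \lambda_{1}$ and $e' \in \lambda_{2}$. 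I would verify, case by case over the edge-pair orbits under the automorphism group $(S_{3} \times S_{3}) \rtimes \mathbb{Z}/2$ of $K_{3,3,1,1}$ (permuting the two size-$3$ partite sets and swapping $x \leftrightarrow y$), that the contributions to both sides of (\ref{maineq}) exactly cancel, so that the difference LHS$-$RHS is invariant under crossing changes. Once that invariance is established, the identity reduces to a finite check on a single standard embedding, for example Kohara--Suzuki's $f_{1}$ in Fig.~\ref{K3311emb}, on which all the $a_{2}$'s and linking numbers can be read off directly. This computation then pins down the additive constant $-18$ and explicitly determines the subset $\Gamma_{6}'$.

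For Part (2), set
$$
Q \;:=\; \sum_{\lambda\in \Gamma_{3,5}^{(2)}(K_{3,3,1,1})}{\rm lk}(f(\lambda))^{2} + 2\sum_{\lambda\in \Gamma_{4,4}^{(2)}(K_{3,3,1,1})}{\rm lk}(f(\lambda))^{2}.
$$
The identity (\ref{maineq}) expresses $Q - 18$ as four times an integer, so $Q \equiv 18 \equiv 2 \pmod 4$, and hence $Q \in \{2,6,10,14,18,22,\ldots\}$. The inequality (\ref{main_ineq}) is therefore equivalent to excluding the five candidates $2, 6, 10, 14, 18$. To do so, I would combine Foisy's Theorem~\ref{ik_k3311} (which produces a cycle $\gamma$ of length at most $8$ with $a_{2}(f(\gamma))$ odd, hence nonzero) with the Conway--Gordon type identity for the Petersen family applied to the two natural $K_{3,3,1} = P_{7}$ subgraphs of $K_{3,3,1,1}$ (obtained by deleting $x$ or $y$); each such subgraph is intrinsically linked and therefore forces at least one $2$-component sublink of odd linking number, whose square contributes to the $(3,5)$ or $(4,4)$ summand of $Q$ after extension through the remaining vertex.

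The main obstacle is the combinatorial enumeration underpinning Part (1): $K_{3,3,1,1}$ has $22$ edges and cycles of every length from $3$ to $8$, so tracking which cycles and cycle pairs receive a contribution from each edge pair $(e,e')$ is extensive, although the symmetry group significantly collapses the number of orbits to check. A secondary difficulty is Part (2), where the sharp bound $22$ (and not merely $Q>0$, nor $Q \ge 6$) must be achieved: pure mod-$4$ information from (\ref{maineq}) combined with Foisy's theorem is not enough, and one must exploit the sign pattern of (\ref{maineq}) together with the contributions coming from the two $K_{3,3,1}$ subgraphs to rule out the remaining small values of $Q$.
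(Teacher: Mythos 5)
Your strategy for part (1) --- showing that the difference of the two sides of (\ref{maineq}) is invariant under crossing changes and then evaluating on one base embedding --- is the classical Conway--Gordon strategy and is genuinely different from the paper's, which instead assembles (\ref{maineq}) by applying the known integral formulas for $P_{7}$ and $Q_{8}$ (Theorem \ref{petersen_refine_pq}) to many subgraphs of $K_{3,3,1,1}$ homeomorphic to these graphs and summing with carefully counted multiplicities. However, as written your argument has a gap at its only substantive step. When a crossing between edges $e$ and $e'$ of a cycle $\gamma$ is changed, the smoothed link $\gamma'\cup\gamma''$ is \emph{not} the image of an element of $\Gamma^{(2)}(K_{3,3,1,1})$: its two components each pass through the crossing point and correspond to paths of the graph, not to disjoint cycles. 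Hence the increments $\varepsilon\,{\rm lk}(f_{0}(\gamma'\cup\gamma''))$ on the $a_{2}$ side and the increments $2\varepsilon\,{\rm lk}(f_{0}(\lambda))$ on the linking side are indexed by different objects, and the claim that ``the contributions exactly cancel'' is precisely what must be proved. Over ${\mathbb Z}_{2}$ only the constant terms $+1$ survive and the cancellation is easy; over ${\mathbb Z}$ one must express each smoothed linking number as a signed combination of linking numbers of genuine disjoint cycle pairs and match coefficients edge-pair by edge-pair, which is the entire content of the integral refinements. Asserting this over automorphism orbits without exhibiting the correspondence leaves part (1) unproved (though the plan is, in principle, workable).

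For part (2) the gap is more serious. You correctly note that (\ref{maineq}) forces the quantity $Q$ on the left of (\ref{main_ineq}) to satisfy $Q\equiv 2\pmod 4$, but the tools you propose cannot exclude $Q\le 18$. Foisy's Theorem \ref{ik_k3311} produces a knot with odd $a_{2}$ and gives no quantitative control on $Q$. The nonsplit links guaranteed in the two $P_{7}$-subgraphs $G_{x}$ and $G_{y}$ lie in $\Gamma_{3,4}^{(2)}$, which does not occur in $Q$ at all, and ``extension through the remaining vertex'' does not preserve the parity of the linking number: the new $5$-cycle differs from the old $4$-cycle by a triangle through the extra vertex whose linking with the $3$-cycle is uncontrolled. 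Even granting everything, this yields only a small lower bound, far from the sharp value $22$ (attained by Kohara--Suzuki's $f_{1}$ with six odd $(3,5)$-links and eight odd $(4,4)$-links). The paper reaches $22$ by a global covering count: two subgraphs $H_{i}=Q_{8}^{(i)}\cup\overline{xy}$, whose contractions along $\overline{xy}$ are isomorphic to $P_{7}$, force two Type~C elements of $\Gamma_{4,4}^{(2)}$ with odd linking number, and thirty-six subgraphs isomorphic to $P_{8}$ each force an odd-linking pair in $\Gamma_{3,5}^{(2)}\cup\Gamma_{4,4}^{(2)}$; since each $(3,5)$-pair lies in exactly two of these thirty-six subgraphs and each Type~A or~B $(4,4)$-pair in exactly four, writing $\sum_{(3,5)}{\rm lk}^{2}=2m$ and splitting into the cases $2m\ge 18$ and $2m\le 16$ gives $Q\ge 22$ in both. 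Some such multiplicity argument is indispensable, and it is missing from your proposal.
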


We prove Theorem \ref{main_theorem_k3311} in the next section. By combining Theorem \ref{main_theorem_k3311} (1) and (2), we immediately have the following. 

\begin{Corollary}\label{main_theorem_k3311_cor}
For any element $f$ in ${\rm SE}(K_{3,3,1,1})$, 
\begin{eqnarray}\label{k3311_ineq}
&&\sum_{\gamma\in \Gamma_{8}(K_{3,3,1,1})}a_{2}(f(\gamma))
-\sum_{\substack{{\gamma\in \Gamma_{7}(K_{3,3,1,1})} \\ {\left\{x,y\right\}\not\subset \gamma}}}a_{2}(f(\gamma)) \\
&&-\sum_{\gamma\in \Gamma_{6}'}a_{2}(f(\gamma))
-\sum_{\substack{{\gamma\in \Gamma_{5}(K_{3,3,1,1})} \\ {\left\{x,y\right\}\not\subset \gamma}}}a_{2}(f(\gamma))
\ge 1. \nonumber
\end{eqnarray}
\end{Corollary}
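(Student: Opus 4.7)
The plan is to derive Corollary \ref{main_theorem_k3311_cor} as an immediate consequence of combining the two assertions of Theorem \ref{main_theorem_k3311}. First I would invoke part (2), which gives the lower bound
\[
\sum_{\lambda\in \Gamma_{3,5}^{(2)}(K_{3,3,1,1})}\mathrm{lk}(f(\lambda))^{2}
+2\sum_{\lambda\in \Gamma_{4,4}^{(2)}(K_{3,3,1,1})}\mathrm{lk}(f(\lambda))^{2}\ge 22
\]
on the quadratic linking expression. Substituting this into the right-hand side of the equality (\ref{maineq}) in part (1) shows that four times the alternating sum of $a_{2}$-values appearing on the left-hand side of (\ref{maineq}) is at least $22-18=4$.

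Dividing through by $4$ then yields exactly the inequality (\ref{k3311_ineq}) of the corollary. There is essentially no obstacle beyond invoking Theorem \ref{main_theorem_k3311}: the entire nontrivial content has been packaged into that theorem, and the passage to the corollary is a single arithmetic step. The design feature worth noting is that the common coefficient $4$ on every $a_{2}$-sum in (\ref{maineq}) is precisely what permits the integer bound $\ge 1$ (rather than a weaker rational bound) to be extracted for the integer-valued quantity in (\ref{k3311_ineq}), which in turn recovers and refines Foisy's intrinsic knottedness statement (Theorem \ref{ik_k3311}) for $K_{3,3,1,1}$.
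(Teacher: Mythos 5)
Your proposal is correct and matches the paper's own (one-line) argument exactly: substitute the bound of Theorem \ref{main_theorem_k3311} (2) into the identity of Theorem \ref{main_theorem_k3311} (1) to get $4\cdot(\text{alternating }a_{2}\text{ sum})\ge 22-18=4$, then divide by $4$. Nothing further is needed.
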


Corollary \ref{main_theorem_k3311_cor} gives an alternative proof of the fact that $K_{3,3,1,1}$ is intrinsically knotted. Moreover, Corollary \ref{main_theorem_k3311_cor} refines Theorem \ref{ik_k3311} by identifying the cycles that might be nontrivial knots in $f(K_{3,3,1,1})$. 

\begin{Remark}\label{bestp}
We see the left side of (\ref{k3311_ineq}) is not always congruent to one modulo two by considering two elements $f_{1}$ and $f_{2}$ in ${\rm SE}(K_{3,3,1,1})$ as illustrated in Fig. \ref{K3311emb}. Thus Corollary \ref{main_theorem_k3311_cor} shows that the argument over ${\mathbb Z}$ has a nice advantage. In particular, $f_{1}$ gives the best possibility for (\ref{k3311_ineq}), and therefore for (\ref{main_ineq}) by Theorem \ref{main_theorem_k3311} (1). Actually $f_{1}(K_{3,3,1,1})$ contains exactly fourteen nontrivial links all of which are Hopf links, where the six of them are the images of elements in $\Gamma_{3,5}^{(2)}(K_{3,3,1,1})$ by $f_{1}$ and the eight of them are the images of elements in $\Gamma_{4,4}^{(2)}(K_{3,3,1,1})$ by $f_{1}$. 
\end{Remark}

As we said before, any element $G$ in ${\mathcal F}_{\triangle}(K_{7})\cup {\mathcal F}_{\triangle}(K_{3,3,1,1})$ is a minor-minimal intrinsically knotted graph. If $G$ belongs to ${\mathcal F}_{\triangle}(K_{7})$, then it is known that Conway-Gordon type formula over ${\mathbb Z}_{2}$ as in Theorem \ref{CG2} also holds for $G$ as follows. 

\begin{Theorem}\label{TY_main_cor} 
{\rm (Nikkuni-Taniyama \cite{NT12})} 
Let $G$ be an element in ${\mathcal F}_{\triangle}(K_{7})$. Then, there exists a map $\omega$ from $\Gamma(G)$ to ${\mathbb Z}_{2}$ such that for any element $f$ in ${\rm SE}(G)$, 
\begin{eqnarray*}
\sum_{\gamma\in \Gamma(G)}\omega(\gamma)a_{2}(f(\gamma)) \equiv 1 \pmod{2}.
\end{eqnarray*}
\end{Theorem}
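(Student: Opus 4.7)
The plan is to argue by induction on the minimum number of $\triangle Y$-exchanges needed to obtain $G$ from $K_{7}$. For the base case $G=K_{7}$, I take $\omega$ to be the indicator function of $\Gamma_{7}(K_{7})$, so that Theorem \ref{CG2} (2) is precisely the desired identity.

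For the inductive step, let $G_{Y}$ be obtained from $G_{\triangle}\in{\mathcal F}_{\triangle}(K_{7})$ by a single $\triangle Y$-exchange that replaces a $3$-cycle $\triangle$ with edges $\overline{uv},\overline{vw},\overline{wu}$ by the tripod $\overline{xu}\cup\overline{xv}\cup\overline{xw}$, and assume inductively that some $\omega_{\triangle}:\Gamma(G_{\triangle})\to {\mathbb Z}_{2}$ realises the identity on $G_{\triangle}$. Given $f_{Y}\in{\rm SE}(G_{Y})$, the first move is to build a companion embedding $\tilde{f}_{\triangle}\in{\rm SE}(G_{\triangle})$ by the standard spatial $\triangle Y$-construction: remove a small ball around $f_{Y}(x)$ together with the three $Y$-edges inside it, and paste in a small planar triangle. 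Then $\tilde{f}_{\triangle}(\triangle)$ is unknotted, hence $a_{2}(\tilde{f}_{\triangle}(\triangle))=0$. I next set up a combinatorial map $\Phi:\Gamma(G_{\triangle})\setminus\{\triangle\}\to\Gamma(G_{Y})$ by cases on the number $k$ of edges of $\triangle$ used by $\gamma_{\triangle}$: for $k=0$, $\Phi(\gamma_{\triangle})=\gamma_{\triangle}$; for $k=1$ with $\gamma_{\triangle}$ using $\overline{uv}$, replace $\overline{uv}$ by $\overline{ux}\cup\overline{xv}$; for $k=2$ with $\gamma_{\triangle}$ using $\overline{uv}\cup\overline{vw}$, replace this $2$-path by $\overline{ux}\cup\overline{xw}$, bypassing $v$. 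The topological input I rely on, which is standard in the $\triangle Y$-theory of spatial graphs (compare the proof of Proposition \ref{MRS}), is that $\tilde{f}_{\triangle}(\gamma_{\triangle})$ is ambient isotopic to $f_{Y}(\Phi(\gamma_{\triangle}))$ for every $\gamma_{\triangle}\neq\triangle$.

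Because $\Phi$ need not be injective, I define
\[
\omega_{Y}(\gamma_{Y}) \equiv \sum_{\gamma_{\triangle}\in\Phi^{-1}(\gamma_{Y})}\omega_{\triangle}(\gamma_{\triangle}) \pmod{2},
\]
which is manifestly independent of $f_{Y}$. Regrouping by fibres of $\Phi$ and absorbing the vanishing $\triangle$-term gives
\[
\sum_{\gamma_{Y}\in\Gamma(G_{Y})}\omega_{Y}(\gamma_{Y})\,a_{2}(f_{Y}(\gamma_{Y}))
= \sum_{\gamma_{\triangle}\in\Gamma(G_{\triangle})}\omega_{\triangle}(\gamma_{\triangle})\,a_{2}(\tilde{f}_{\triangle}(\gamma_{\triangle})),
\]
which is $\equiv 1 \pmod{2}$ by the inductive hypothesis applied to $\tilde{f}_{\triangle}$. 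The principal obstacle is the ambient isotopy $\tilde{f}_{\triangle}(\gamma_{\triangle})\sim f_{Y}(\Phi(\gamma_{\triangle}))$ in the $k=2$ case, where one must slide a back-and-forth excursion through $v$ off the cycle without disturbing the rest of $\tilde{f}_{\triangle}(G_{\triangle})$; equivalently, the spatial $\triangle Y$-construction must be set up carefully (with slightly parallel rather than coincident arcs near $f_{Y}(x)$) so that $\tilde{f}_{\triangle}$ is a genuine embedding whose cycles realise the required knot types on each fibre of $\Phi$.
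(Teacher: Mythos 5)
Your proof is correct and is essentially the argument the paper itself relies on: Theorem \ref{TY_main_cor} is only quoted from \cite{NT12}, but Section 3 reproduces exactly your mechanism --- the fibre map $\Phi$, the weight $\tilde{\omega}$ obtained by summing $\omega$ over $\Phi^{-1}(\gamma)$, the companion embedding $\varphi(f)$, Proposition \ref{map} and Lemma \ref{lemma2a} --- to run the same induction for the parallel Theorem \ref{NT_main_cor}. The only cosmetic difference is that the paper builds the companion embedding by taking the triangle to be $\partial D$ for a $2$-disk $D$ with $D\cap f(G_{Y})=f(Y)$ and $\partial D\cap f(G_{Y})=\{f(u),f(v),f(w)\}$, which cleanly disposes of the coincident-arc issue you flag at the end.
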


Namely, for any element $G$ in ${\mathcal F}_{\triangle}(K_{7})$, there exists a subset $\Gamma$ of $\Gamma(G)$ which depends on only $G$ such that for any element $f$ in ${\rm SE}(G)$, the sum of $a_{2}$ over all of the images of the elements in $\Gamma$ by $f$ is odd. On the other hand, if $G$ belongs to ${\mathcal F}_{\triangle}(K_{3,3,1,1})$, we have a Conway-Gordon type formula over ${\mathbb Z}$ for $G$ as in Corollary \ref{main_theorem_k3311_cor} as follows. We prove it in section $3$. 

\begin{Theorem}\label{NT_main_cor} 
Let $G$ be an element in ${\mathcal F}_{\triangle}(K_{3,3,1,1})$. Then, there exists a map $\omega$ from $\Gamma(G)$ to ${\mathbb Z}$ such that for any element $f$ in ${\rm SE}(G)$, 
\begin{eqnarray*}
\sum_{\gamma\in \Gamma(G)}\omega(\gamma)a_{2}(f(\gamma))
\ge 1.
\end{eqnarray*}
\end{Theorem}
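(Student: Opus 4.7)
The plan is to proceed by induction on the minimum number of $\triangle Y$-exchanges needed to obtain $G$ from $K_{3,3,1,1}$. The base case $G = K_{3,3,1,1}$ is exactly Corollary \ref{main_theorem_k3311_cor}. For the inductive step, suppose $G_Y$ is obtained from some $G_\triangle \in {\mathcal F}_\triangle(K_{3,3,1,1})$ by a single $\triangle Y$-exchange at a triangle $\triangle = \overline{uv}\cup\overline{vw}\cup\overline{wu}$, producing the new degree-$3$ vertex $x$ and tripod $Y = \overline{ux}\cup\overline{vx}\cup\overline{wx}$, and assume by induction that there is a map $\omega_\triangle\colon \Gamma(G_\triangle) \to {\mathbb Z}$ with $\sum_{\gamma} \omega_\triangle(\gamma) a_2(f(\gamma)) \geq 1$ for every $f \in {\rm SE}(G_\triangle)$. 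The geometric input is that for any $g \in {\rm SE}(G_Y)$, after a preliminary ambient isotopy (which preserves every $a_2(g(\gamma'))$) one may assume $g(Y)$ lies in a $3$-ball $B$ disjoint from $g(G_Y - Y)$; replacing $g(Y)$ inside $B$ by the straight triangle $\triangle' = g(u)g(v)g(w)$ yields $f \in {\rm SE}(G_\triangle)$ agreeing with $g$ outside $B$, for which $f(\triangle) = \triangle'$ bounds a disk in $B$ disjoint from $f(G_\triangle - \triangle)$; in particular $a_2(f(\triangle)) = 0$.

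I would then transfer $\omega_\triangle$ to $\omega_Y$ by pulling back along the natural cycle correspondence. Each $\gamma \in \Gamma(G_\triangle)\setminus\{\triangle\}$ uses $0$, $1$, or $2$ edges of $\triangle$; let $\gamma^\ast \in \Gamma(G_Y)$ be $\gamma$ itself in the first case, $(\gamma \setminus \overline{ab}) \cup \overline{ax} \cup \overline{xb}$ when $\gamma$ contains the unique triangle edge $\overline{ab}$, and $(\gamma \setminus (\overline{ac}\cup\overline{cb})) \cup \overline{ax} \cup \overline{xb}$ when $\gamma$ contains the two triangle edges $\overline{ac}, \overline{cb}$. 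In each case the arc of $f(\gamma)$ lying on $\triangle'$ is isotopic through the disk bounded by $\triangle'$ to the arc of $g(\gamma^\ast)$ running through $g(Y)$, so $a_2(f(\gamma)) = a_2(g(\gamma^\ast))$. The fiber $\{\gamma : \gamma^\ast = \gamma'\}$ of a given $\gamma' \in \Gamma(G_Y)$ has exactly one element when $\gamma'$ avoids $x$, and either one or two elements when $\gamma'$ passes through $x$ via $\overline{ax}, \overline{xb}$, depending on whether the complementary path from $a$ to $b$ contains the third triangle vertex $c$ or not. Setting $\omega_Y(\gamma') = \sum_{\gamma^\ast = \gamma'} \omega_\triangle(\gamma)$ yields a map $\omega_Y\colon \Gamma(G_Y) \to {\mathbb Z}$ for which
\[
\sum_{\gamma' \in \Gamma(G_Y)} \omega_Y(\gamma') a_2(g(\gamma')) = \sum_{\gamma \in \Gamma(G_\triangle)} \omega_\triangle(\gamma) a_2(f(\gamma)) \geq 1,
\]
the last inequality being the inductive hypothesis and the $\gamma = \triangle$ term on the right contributing zero.

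The principal obstacle is the geometric $\triangle Y$-exchange step: one must verify that the isotopy of $g(Y)$ into a small ball $B$ disjoint from $g(G_Y - Y)$ is always available, and that each of the three types of arcs of $f(\gamma)$ lying on $\triangle'$ genuinely isotopes through the disk to its counterpart in $g(\gamma^\ast)$ without introducing extra crossings with the rest of the embedding. This is essentially the same machinery developed in Nikkuni-Taniyama \cite{NT12} for Theorem \ref{TY_main_cor}, adapted here with Corollary \ref{main_theorem_k3311_cor} in place of Theorem \ref{CG2} (2) as the base of the induction.
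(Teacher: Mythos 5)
Your proposal is correct and follows essentially the same route as the paper: your cycle correspondence $\gamma\mapsto\gamma^{\ast}$ is the paper's map $\Phi$, your pulled-back weight $\omega_{Y}$ is exactly the paper's $\tilde{\omega}$ in (\ref{omega_map}), your replacement of $g(Y)$ by a flat triangle is the paper's map $\varphi$, and your closing identity together with the vanishing of the $\triangle$ term is Lemma \ref{lemma2a}. The geometric step you flag as the principal obstacle is disposed of in the paper exactly as you suggest, by taking a disk $D$ with $D\cap f(G_{Y})=f(Y)$ and $\partial D\cap f(G_{Y})=\{f(u),f(v),f(w)\}$ and setting $\varphi(f)(\triangle)=\partial D$, following \cite{NT12}.
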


Our second purpose in this article is to give an application of Theorem \ref{main_theorem_k3311} to the theory of rectilinear spatial graphs. A spatial embedding $f$ of a graph $G$ is said to be {\it rectilinear} if for any edge $e$ of $G$, $f(e)$ is a straight line segment in ${\mathbb R}^{3}$. We denote the set of all rectilinear spatial embeddings of $G$ by ${\rm RSE}(G)$. We can see that any simple graph has a rectilinear spatial embedding by taking all of the vertices on the spatial curve $(t,t^{2},t^{3})$ in ${\mathbb R}^{3}$ and connecting every pair of two adjacent vertices by a straight line segment. Rectilinear spatial graphs appear in polymer chemistry as a mathematical model for chemical compounds, see \cite{adams} for example. Then by an application of Theorem \ref{main_theorem_k3311}, we have the following. 

\begin{Theorem}\label{main_theorem_recti}
For any element $f$ in ${\rm RSE}(K_{3,3,1,1})$, 
\begin{eqnarray*}
\sum_{\gamma\in \Gamma_{8}(K_{3,3,1,1})}a_{2}(f(\gamma))
\ge 1. 
\end{eqnarray*}
\end{Theorem}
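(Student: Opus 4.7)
The plan is to combine Corollary~\ref{main_theorem_k3311_cor} with classical stick-number bounds on polygonal knots. Rewriting Corollary~\ref{main_theorem_k3311_cor} in the form
\begin{equation*}
\sum_{\gamma\in \Gamma_{8}(K_{3,3,1,1})}a_{2}(f(\gamma))
\;\ge\; 1
+ \sum_{\substack{\gamma\in \Gamma_{7}(K_{3,3,1,1}) \\ \{x,y\}\not\subset \gamma}}a_{2}(f(\gamma))
+ \sum_{\gamma\in \Gamma_{6}'}a_{2}(f(\gamma))
+ \sum_{\substack{\gamma\in \Gamma_{5}(K_{3,3,1,1}) \\ \{x,y\}\not\subset \gamma}}a_{2}(f(\gamma))
\end{equation*}
reduces Theorem~\ref{main_theorem_recti} to proving that, for every $f\in {\rm RSE}(K_{3,3,1,1})$, the three correction sums on the right are collectively non-negative.

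Next I would invoke the standard stick-number bounds: every nontrivial knot has stick number at least six, $s(3_{1})=6$, $s(4_{1})=7$, and every other nontrivial knot has stick number at least eight. Under a rectilinear $f$, each $k$-cycle $\gamma$ of $K_{3,3,1,1}$ is realised as a $k$-stick polygon in ${\mathbb R}^{3}$, so $a_{2}(f(\gamma))=0$ when $k=5$, $a_{2}(f(\gamma))\in\{0,1\}$ when $k=6$, and $a_{2}(f(\gamma))\in\{-1,0,1\}$ when $k=7$. Therefore the $\Gamma_{5}$-sum vanishes and the $\Gamma_{6}'$-sum is non-negative, so the whole argument reduces to showing
\[ \sum_{\substack{\gamma\in \Gamma_{7}(K_{3,3,1,1}) \\ \{x,y\}\not\subset \gamma}}a_{2}(f(\gamma)) \;\ge\; 0. \]

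For the remaining step, fix such a $\gamma$; without loss of generality $y\notin\gamma$, so $\gamma$ is a Hamilton cycle of $K_{3,3,1,1}-y\cong K_{3,3,1}$ through $x$, and every vertex of $\gamma$ is adjacent to $y$ in $K_{3,3,1,1}$. The combinatorial cone $y\ast\gamma$ is then a union of seven triangular faces of $K_{3,3,1,1}$, realised under $f$ as a fan of seven flat triangles sharing the apex $f(y)$. When this fan is an embedded piecewise-linear disk in ${\mathbb R}^{3}$, $f(\gamma)$ bounds a disk and is unknotted, contributing $0$. The main obstacle is the opposite case, in which the fan self-intersects and $f(\gamma)$ could a priori be a figure-eight knot; the core of the argument is to exclude this possibility, either cycle-by-cycle via a geometric analysis of how seven flat triangles with a common apex can intersect inside a rectilinear $K_{3,3,1,1}$, or in aggregate by pairing each potential figure-eight $7$-cycle with a trefoil among the other such $7$-cycles so that the signed total of $a_{2}$ remains non-negative.
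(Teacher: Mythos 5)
Your reduction is exactly the paper's first step: rewrite Corollary \ref{main_theorem_k3311_cor} as a lower bound for the $\Gamma_{8}$-sum, kill the $\Gamma_{5}$-terms and bound the $\Gamma_{6}'$-terms below by $0$ using stick numbers ($s(K)\ge 6$ for nontrivial $K$, with equality only for the trefoil), and isolate the $\Gamma_{7}$-sum as the only possible source of negativity because a $7$-stick polygon can be a figure-eight knot with $a_{2}=-1$. Up to that point you agree with the paper.

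The gap is that you never actually prove
$\sum_{\{x,y\}\not\subset\gamma}a_{2}(f(\gamma))\ge 0$
over the $7$-cycles; you only name two candidate strategies, and neither is viable as stated. The cycle-by-cycle geometric analysis of the cone $y\ast\gamma$ would have to show that no $7$-cycle of a rectilinear $K_{3,3,1,1}$ is a figure-eight knot, which is far stronger than what is needed and is not something the paper claims or uses (indeed the paper's closing remark, that it is still open whether every rectilinear $K_{3,3,1,1}$ contains a trefoil, signals that controlling individual knot types in such embeddings is hard). The ``pairing each figure-eight with a trefoil'' idea is the right aggregate statement but you give no mechanism for it. The paper's mechanism is another integral Conway--Gordon formula: the set of $7$-cycles not containing both $x$ and $y$ is the disjoint union $\Gamma_{7}(G_{x})\cup\Gamma_{7}(G_{y})$ with each $G_{v}$ isomorphic to $P_{7}$, and O'Donnol's formula (Theorem \ref{petersen_refine_pq}~(2)) gives
$2\sum_{\Gamma_{7}(P_{7})}a_{2}=\sum_{\Gamma_{3,4}^{(2)}(P_{7})}{\rm lk}^{2}-1+4\sum a_{2}(\text{$6$-cycles})+2\sum a_{2}(\text{$5$-cycles})$,
where the linking term is at least $1$ by Corollary \ref{CG_refine_Do} and the $a_{2}$-terms on the right are non-negative by the same stick-number facts you already used; hence each of the two $\Gamma_{7}$-sums is $\ge 0$ (Lemma \ref{recti_P7}). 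Without this (or an equivalent argument) your proof does not close.
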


We prove Theorem \ref{main_theorem_recti} in section $4$. As a corollary of Theorem \ref{main_theorem_recti}, we immediately have the following. 

\begin{Corollary}\label{main_cor_recti}
For any element $f$ in ${\rm RSE}(K_{3,3,1,1})$, there exists a Hamiltonian cycle $\gamma$ of $K_{3,3,1,1}$ such that $f(\gamma)$ is a nontrivial knot with $a_{2}(f(\gamma))>0$. 
\end{Corollary}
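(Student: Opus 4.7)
The plan is to deduce this corollary immediately from Theorem \ref{main_theorem_recti}. Since $K_{3,3,1,1}$ has exactly $3+3+1+1=8$ vertices, by definition a Hamiltonian cycle of $K_{3,3,1,1}$ is precisely an element of $\Gamma_{8}(K_{3,3,1,1})$. Therefore the inequality
\begin{eqnarray*}
\sum_{\gamma\in \Gamma_{8}(K_{3,3,1,1})}a_{2}(f(\gamma)) \ge 1
\end{eqnarray*}
supplied by Theorem \ref{main_theorem_recti} is a statement about Hamiltonian knots in $f(K_{3,3,1,1})$.

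Given that the $a_{2}$-values are integers, the sum on the left being at least $1$ forces at least one summand to be strictly positive: that is, there exists a Hamiltonian cycle $\gamma$ of $K_{3,3,1,1}$ with $a_{2}(f(\gamma))>0$. Since $a_{2}$ vanishes on the trivial knot, any knot with $a_{2}>0$ is necessarily nontrivial, so $f(\gamma)$ is a nontrivial Hamiltonian knot of the required form.

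In short, this corollary involves no new argument beyond Theorem \ref{main_theorem_recti}; it is purely an unpacking of the statement. All the genuine work lies in establishing Theorem \ref{main_theorem_recti} itself (which in turn rests on Theorem \ref{main_theorem_k3311}), so there is no obstacle to address here.
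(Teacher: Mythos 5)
Your proposal is correct and matches the paper exactly: the paper also derives this corollary immediately from Theorem \ref{main_theorem_recti}, since an integer sum of $a_{2}$-values over $\Gamma_{8}(K_{3,3,1,1})$ being at least $1$ forces some Hamiltonian cycle $\gamma$ with $a_{2}(f(\gamma))>0$, hence a nontrivial knot. No further comment is needed.
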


Corollary \ref{main_cor_recti} is an affirmative answer to the question of Foisy-Ludwig \cite[\sc Question 5.8]{FL09} which asks whether the image of every rectilinear spatial embedding of $K_{3,3,1,1}$ always contains a nontrivial Hamiltonian knot. 

\begin{Remark}\label{conc_rem}
\begin{enumerate}
\item In \cite[\sc Question 5.8]{FL09}, Foisy-Ludwig also asked that whether the image of every spatial embedding of $K_{3,3,1,1}$ (which may not be rectilinear) always contains a nontrivial Hamiltonian knot. As far as the authors know, it is still open. 
\item In addition to the elements in ${\mathcal F}_{\triangle}(K_{7})\cup {\mathcal F}_{\triangle}(K_{3,3,1,1})$, many minor-minimal intrinsically knotted graph are known \cite{F04}, \cite{GMN}. In particular, it has been announced by Goldberg-Mattman-Naimi that all of the thirty two elements in ${\mathcal F}(K_{3,3,1,1})\setminus {\mathcal F}_{\triangle}(K_{3,3,1,1})$ are minor-minimal intrinsically knotted graphs \cite{GMN}. Note that their method is based on Foisy's idea in the proof of Theorem \ref{ik_k3311} with the help of a computer. 
\end{enumerate}
\end{Remark}

\section{Conway-Gordon type formula for $K_{3,3,1,1}$} 

To prove Theorem \ref{main_theorem_k3311}, we recall a Conway-Gordon type formula over ${\mathbb Z}$ for a graph in the $K_{6}$-family which is as below. 

\begin{Theorem}\label{petersen_refine}
Let $G$ be an element in ${\mathcal F}(K_{6})$. Then there exist a map $\omega$ from $\Gamma(G)$ to ${\mathbb Z}$ such that for any element $f$ in ${\rm SE}(G)$, 
\begin{eqnarray}\label{pet_ref_eq}
2\sum_{\gamma\in \Gamma(G)}\omega(\gamma)a_{2}(f(\gamma))
=
\sum_{\gamma\in \Gamma^{(2)}(G)}{\rm lk}(f(\gamma))^{2}
-1. 
\end{eqnarray}
\end{Theorem}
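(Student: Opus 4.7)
The plan is to establish the identity first for $G = K_{6}$ and then propagate it to every member of the Petersen family via $\triangle Y$ and $Y\triangle$ exchanges, using the fact that every element of ${\mathcal F}(K_{6})$ is obtained from $K_{6}$ by such a sequence of moves.

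For the base case $G = K_{6}$, the idea is to show that both sides of (\ref{pet_ref_eq}) define integer-valued functions on ${\rm SE}(K_{6})$ with the same variation under a single crossing change between two edges $e$ and $e'$. The skein relation $a_{2}(L_{+}) - a_{2}(L_{-}) = {\rm lk}(L_{0})$, together with the fact that ${\rm lk}(f(\gamma\cup\gamma'))$ changes by $\pm 1$ precisely when $e \subset \gamma$ and $e' \subset \gamma'$ (so ${\rm lk}(f(\lambda))^{2}$ changes by $\pm 2\,{\rm lk}(f(\lambda)) + 1$), makes the variation of each side computable explicitly. A judicious choice of integer weights on the $5$- and $6$-cycles of $K_{6}$ then forces the two variations to coincide. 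Since crossing changes generate all of ${\rm SE}(K_{6})$, the identity reduces to checking it on a single convenient embedding, for instance a rectilinear embedding with the six vertices in convex position, where both sides can be computed by hand and the constant $-1$ is anchored.

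For the inductive step, suppose the formula holds for $G_{\triangle}$ with weight map $\omega_{\triangle}$, and let $G_{Y}$ arise from $G_{\triangle}$ by replacing a $3$-cycle $\triangle = uvw$ with a tripod $Y = \overline{ux}\cup\overline{vx}\cup\overline{wx}$. Every spatial embedding $f$ of $G_{Y}$ determines an embedding $\tilde f$ of $G_{\triangle}$ by collapsing $f(Y)$ to a triangle in its regular neighborhood, and there is a natural correspondence between $\Gamma(G_{Y})$ and $\Gamma(G_{\triangle})$ that sends each cycle through $x$ (which uses exactly two edges of $Y$) to the $G_{\triangle}$-cycle obtained by replacing the subpath $u$--$x$--$v$ by the edge $uv$. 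Under this correspondence both ${\rm lk}$ and $a_{2}$ are preserved; this is a standard Taniyama--Yasuhara-type invariance. Hence the right-hand side of (\ref{pet_ref_eq}) is unchanged under the exchange, and defining $\omega_{Y}(\gamma) := \omega_{\triangle}(\tilde\gamma)$ on cycles in the image of the correspondence (and $0$ otherwise) supplies a valid weight map for $G_{Y}$. The single $Y\triangle$-exchange needed to reach $P_{7}$ is handled by reversing the correspondence.

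The main obstacle is the base case: finding an explicit $\omega$ on $\Gamma(K_{6})$ and verifying that the crossing-change variations of the two sides of (\ref{pet_ref_eq}) match edge-pair by edge-pair. This is essentially an integer refinement of the mod-$2$ Conway--Gordon calculation in (\ref{CG_f1}), and requires a careful tally of the pairs of disjoint cycles in $K_{6}$ that share a given pair of edges. The propagation step is mechanical once the base case is settled, although one must take care that the cycle correspondence under a $\triangle Y$-exchange is not bijective (some $G_{\triangle}$-cycles use two edges of $\triangle$ and have no single-cycle lift in $G_{Y}$), which is why $\omega$ is permitted to vanish on the cycles outside the image of the correspondence.
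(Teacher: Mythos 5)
Your overall strategy (a crossing-change/skein analysis to establish the identity on $K_{6}$, followed by transfer along $\triangle Y$-exchanges) matches the route taken in the sources this paper relies on: the paper itself gives no proof of Theorem \ref{petersen_refine}, but attributes the $K_{6}$ case to Nikkuni \cite{N09b}, the $P_{7}$ case to O'Donnol \cite{D10}, and the remaining graphs to Nikkuni--Taniyama \cite{NT12}, and the transfer mechanism you describe is exactly the one reproduced in Section 3 (Proposition \ref{map}, Lemma \ref{lemma2a}). However, two points in your outline are genuine gaps. First, your handling of the non-injectivity of the cycle correspondence is backwards: the map $\Phi:\Gamma(G_{\triangle})\setminus\{\triangle\}\to\Gamma(G_{Y})$ is \emph{surjective}, so there are no cycles of $G_{Y}$ ``outside the image'' on which to set the weight to zero; the actual issue is that a cycle of $G_{Y}$ through the new vertex $x$ may have \emph{two} preimages (one using a single edge of $\triangle$, one using two), and the correct weight is the sum $\tilde{\omega}(\gamma)=\sum_{\gamma'\in\Phi^{-1}(\gamma)}\omega(\gamma')$ as in (\ref{omega_map}), not the value at one chosen lift; the triangle $\triangle$ itself drops out only because $\varphi(f)(\triangle)$ bounds a disk, hence is unknotted and unlinked from everything disjoint from it.

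Second, and more seriously, the claim that the $Y\triangle$-exchange needed to reach $P_{7}$ ``is handled by reversing the correspondence'' does not work. The transfer argument is valid only in the $\triangle Y$ direction, because only there is there a canonical map $\varphi:{\rm SE}(G_{Y})\to{\rm SE}(G_{\triangle})$ (collapse $f(Y)$ inside a disk); this map is far from surjective, so validity of the identity for $G_{Y}$ says nothing about embeddings of $G_{\triangle}$ that are not of the form $\varphi(f)$. This asymmetry is precisely why Proposition \ref{MRS} is one-directional and why its converse fails in the $K_{7}$-family. Since $P_{7}\notin{\mathcal F}_{\triangle}(K_{6})$, your induction cannot reach $P_{7}$, and that is the one case the present paper actually uses (Theorem \ref{petersen_refine_pq} (2)); it requires an independent argument such as O'Donnol's. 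Finally, be aware that your ``base case'' is a plan rather than a proof: the entire content of the theorem for $K_{6}$ lies in the edge-pair-by-edge-pair matching of the skein terms $a_{2}(L_{+})-a_{2}(L_{-})={\rm lk}(L_{0})$ against the $\pm 2\,{\rm lk}+1$ variation of the squared linking numbers, together with the determination of the weights (which can be taken to be $+1$ on Hamiltonian cycles, $-1$ on $5$-cycles and $0$ on shorter cycles), and none of that verification is carried out.
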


We remark here that Theorem \ref{petersen_refine} was shown by Nikkuni (for the case $G=K_{6}$) \cite{N09b}, O'Donnol ($G=P_{7}$) \cite{D10} and Nikkuni-Taniyama (for the others) \cite{NT12}. In particular, we use the following explicit formulae for $Q_{8}$ and $P_{7}$ in the proof of Theorem \ref{main_theorem_k3311}. For the other cases, see Hashimoto-Nikkuni \cite{HN12a}. 

\begin{Theorem}\label{petersen_refine_pq} 
\begin{enumerate}
\item {\rm (Hashimoto-Nikkuni \cite{HN12a})} 
For any element $f$ in ${\rm SE}(Q_{8})$, 
\begin{eqnarray*}
&&2\sum_{\gamma\in \Gamma_{7}(P_{7})}a_{2}(f(\gamma))
+2\sum_{\substack{{\gamma\in \Gamma_{6}(Q_{8})} \\ {v,v'\not\in\gamma}}}a_{2}(f(\gamma))
-2\sum_{\substack{{\gamma\in \Gamma_{6}(Q_{8})} \\ {\gamma\cap\{v,v'\}\neq \emptyset}}}a_{2}(f(\gamma))\\
&=&
\sum_{\gamma\in \Gamma_{4,4}^{(2)}(Q_{8})}{\rm lk}(f(\gamma))^{2}-1,
\end{eqnarray*}
where $v$ and $v'$ are exactly two vertices of $Q_{8}$ with valency three. 

\item {\rm (O'Donnol \cite{D10})} For any element $f$ in ${\rm SE}(P_{7})$, 
\begin{eqnarray*}
&&2\sum_{\gamma\in \Gamma_{7}(P_{7})}a_{2}(f(\gamma))
-4\sum_{\substack{{\gamma\in \Gamma_{6}(P_{7})} \\ {u\not\in\gamma}}}a_{2}(f(\gamma))
-2\sum_{\gamma\in \Gamma_{5}(P_{7})}a_{2}(f(\gamma))\\
&=&
\sum_{\gamma\in \Gamma_{3,4}^{(2)}(P_{7})}{\rm lk}(f(\gamma))^{2}-1,
\end{eqnarray*}
where $u$ is the vertex of $P_{7}$ with valency six. 
\end{enumerate}
\end{Theorem}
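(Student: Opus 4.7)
The plan is to deduce both parts of Theorem \ref{petersen_refine_pq} from the Conway-Gordon type ${\mathbb Z}$-formula for $K_6$ (Theorem \ref{petersen_refine} applied to $G=K_6$, which is due to Nikkuni) by transporting the identity across $\triangle Y$- and $Y\triangle$-exchanges within the Petersen family. Recall that every Petersen family graph (including $Q_8$ and $P_7$) is connected to $K_6$ by a finite sequence of such exchanges; the strategy is to follow such a path and track how the identity evolves.

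The local transport is set up as follows. Given a $\triangle Y$-exchange $G_\triangle \rightsquigarrow G_Y$ at a triangle $T=uvw$ with new vertex $x$, any $f\in {\rm SE}(G_Y)$ determines a companion $\tilde f \in {\rm SE}(G_\triangle)$ by taking a small ball $B$ around $f(x)$, disjoint from the remainder of the graph, and replacing the three $Y$-arcs inside $B$ by three arcs on $\partial B$ that form a triangle. This induces a correspondence: a cycle of $G_Y$ avoiding $x$ matches the same cycle in $G_\triangle$ (avoiding $T$), with equal $f$- and $\tilde f$-images; a cycle of $G_Y$ through $x$ using two $Y$-edges (say $\overline{ux},\overline{xv}$) matches the $G_\triangle$-cycle that uses $\overline{uv}$ instead, with ambient isotopic images; and cycles of $G_\triangle$ passing through two edges of $T$ have no direct match in $G_Y$. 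In this last case the discrepancy is controlled by the skein identity $a_2(K_+)-a_2(K_-)={\rm lk}(L)$, which converts extra $a_2$-contributions into linking numbers of auxiliary links in $G_\triangle$.

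Applying the $K_6$-formula $2\sum_{\gamma\in \Gamma_6(K_6)}a_2(\tilde f(\gamma))=\sum_{\lambda\in \Gamma^{(2)}(K_6)}{\rm lk}(\tilde f(\lambda))^2-1$ at each step of a Petersen-family path leading to $P_7$ and $Q_8$, one classifies the Hamiltonian $6$-cycles of $K_6$ by their intersection pattern with the triangle being exchanged, and splits the $3$-$3$ link sum analogously. Each class translates by the correspondence to a specific cycle class in the target graph, picking up additional terms from the skein corrections. After iterating and re-indexing, the sums reorganize into exactly those appearing in part (1) and part (2); the coefficients $\pm 2$ and $-4$ and the sign of the linking-number sum are pinned down by the number of $K_6$-cycles and $K_6$-links mapping to each target class, combined with the signs contributed by the skein identity.

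The main obstacle is the combinatorial bookkeeping: one must enumerate precisely how Hamiltonian cycles and $3$-$3$ linkings of $K_6$ interact with the triangles that are exchanged en route to $P_7$ or $Q_8$, verify that the skein corrections absorb exactly into the $a_2$-sums over lower-length cycles of the target graph with the advertised coefficients, and check that only the $2$-cycle configurations $\Gamma_{3,4}^{(2)}(P_7)$ and $\Gamma_{4,4}^{(2)}(Q_8)$ survive on the right-hand side while all other linking contributions cancel or are absorbed. The surviving linking-number type reflects the part structure of $K_{3,3,1}$ for $P_7$ and the symmetry of $Q_8$ about its two degree-$3$ vertices $v,v'$ for $Q_8$; carrying out these cancellations constitutes the computational heart of the proof.
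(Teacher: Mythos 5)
Before assessing your argument: the paper does not prove Theorem \ref{petersen_refine_pq} at all --- both parts are quoted results, cited to Hashimoto--Nikkuni \cite{HN12a} and O'Donnol \cite{D10} --- so there is no in-paper proof to compare against. Judged on its own terms, your proposal has a fatal structural gap in part (2). The transport mechanism you describe (formalized in Section 3 of the paper via the maps $\Phi$ and $\varphi$ and Lemma \ref{lemma2a}) moves an identity only in the $\triangle Y$ direction: a formula valid for all embeddings of $G_{\triangle}$ yields one for all embeddings of $G_{Y}$, because every $f\in{\rm SE}(G_{Y})$ has a companion $\varphi(f)\in{\rm SE}(G_{\triangle})$ in which the triangle bounds a disk. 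There is no transport across a $Y\triangle$-exchange, since an arbitrary embedding of $G_{\triangle}$ need not arise as $\varphi(f)$ for any $f$. Now $P_{7}=K_{3,3,1}$ has minimum degree $4$, so it is not the output of any $\triangle Y$-exchange, and the paper itself records ${\mathcal F}_{\triangle}(K_{6})={\mathcal F}(K_{6})\setminus\{P_{7}\}$: every path in the Petersen family from $K_{6}$ to $P_{7}$ must traverse a $Y\triangle$-exchange, which your transport cannot cross. This is exactly why the $P_{7}$ case required O'Donnol's separate direct proof rather than the $\triangle Y$ argument of \cite{NT12}. Your plan therefore cannot produce part (2).

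Even for $Q_{8}$, which is reachable from $K_{6}$ by $\triangle Y$-exchanges so that the strategy is viable (and is essentially what \cite{NT12} and \cite{HN12a} do), your bookkeeping of the cycle correspondence is wrong. A cycle of $G_{\triangle}$ passing through two edges of the triangle does have a counterpart in $G_{Y}$: it is the second element of the two-element fiber of $\Phi$ depicted in Fig.\ \ref{not_inj3}. The only cycle with no counterpart is $\triangle$ itself, and it contributes nothing because $\varphi(f)(\triangle)$ bounds a disk. Consequently the identity transports exactly, with new weights $\tilde{\omega}(\gamma)=\sum_{\gamma'\in\Phi^{-1}(\gamma)}\omega(\gamma')$, and no skein corrections arise; the relation $a_{2}(K_{+})-a_{2}(K_{-})={\rm lk}(K_{0})$ concerns crossing changes and belongs to the proof of the $K_{6}$ base case, not to the $\triangle Y$ transport. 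Finally, the $K_{6}$ input formula you quote omits the term $-2\sum_{\gamma\in\Gamma_{5}(K_{6})}a_{2}(f(\gamma))$ on the left-hand side; with that term missing, the weights you would compute for $Q_{8}$ would not reproduce the coefficients in the statement.
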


By taking the modulo two reduction of (\ref{pet_ref_eq}), we immediately have the following fact containing Theorem \ref{CG2} (1). 

\begin{Corollary}\label{CG_refine_Do} 
{\rm (Sachs \cite{S84}, Taniyama-Yasuhara \cite{TY01})} 
Let $G$ be an element in ${\mathcal F}(K_{6})$. Then, for any element $f$ in ${\rm SE}(G)$,  
\begin{eqnarray*}
\sum_{\gamma\in \Gamma^{(2)}(G)}{\rm lk}(f(\gamma))
\equiv 1 \pmod{2}. 
\end{eqnarray*}
\end{Corollary}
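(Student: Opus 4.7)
The plan is to derive the corollary as an immediate modulo two reduction of Theorem \ref{petersen_refine}, which has just been recalled. Concretely, for any $f$ in ${\rm SE}(G)$ I would start from the identity
\begin{eqnarray*}
2\sum_{\gamma\in \Gamma(G)}\omega(\gamma)a_{2}(f(\gamma))
=
\sum_{\gamma\in \Gamma^{(2)}(G)}{\rm lk}(f(\gamma))^{2}-1
\end{eqnarray*}
and reduce both sides modulo two. The left side vanishes in ${\mathbb Z}_{2}$ thanks to the global factor of $2$, so we obtain
\begin{eqnarray*}
\sum_{\gamma\in \Gamma^{(2)}(G)}{\rm lk}(f(\gamma))^{2}\equiv 1\pmod{2}.
\end{eqnarray*}

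The second step is the elementary observation that $n^{2}\equiv n\pmod{2}$ for every integer $n$. Applying this pointwise to each linking number ${\rm lk}(f(\gamma))$ turns the congruence above into
\begin{eqnarray*}
\sum_{\gamma\in \Gamma^{(2)}(G)}{\rm lk}(f(\gamma))\equiv 1\pmod{2},
\end{eqnarray*}
which is precisely the assertion of Corollary \ref{CG_refine_Do}.

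There is essentially no obstacle here: the entire content lies in Theorem \ref{petersen_refine}, whose existence for every graph in the $K_{6}$-family is quoted from the references \cite{N09b,D10,NT12}. The only thing to verify, and it is trivial, is that both reductions (divisibility by $2$ on the left side and the identity $n^{2}\equiv n\pmod 2$ on the right side) are compatible with the summation. No case analysis on which graph of ${\mathcal F}(K_{6})$ is being considered is needed, since Theorem \ref{petersen_refine} is uniform over the family. In particular, specialising to $G=K_{6}$ recovers the original Conway--Gordon formula (\ref{CG_f1}).
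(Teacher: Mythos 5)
Your argument is correct and is exactly the paper's own proof: the authors obtain the corollary by "taking the modulo two reduction of (\ref{pet_ref_eq})", which is precisely your two-step reduction (the factor of $2$ kills the left side, and $n^{2}\equiv n\pmod{2}$ converts the sum of squared linking numbers into the sum of linking numbers). Nothing further is needed.
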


Now we give labels for the vertices of $K_{3,3,1,1}$ as illustrated in the left figure in Fig. \ref{GxGy}. We also call the vertices $1,3,5$ and $2,4,6$ the {\it black vertices} and the {\it white vertices}, respectively. We regard $K_{3,3}$ as the subgraph of $K_{3,3,1,1}$ induced by all of the white and black vertices. Let $G_{x}$ and $G_{y}$ be two subgraphs of $K_{3,3,1,1}$ as illustrated in Fig. \ref{GxGy} (1) and (2), respectively. Since each of $G_{x}$ and $G_{y}$ is isomorphic to $P_{7}$, by applying Theorem \ref{petersen_refine_pq} (2) to $f|_{G_{x}}$ and $f|_{G_{y}}$ for an element $f$ in ${\rm SE}(K_{3,3,1,1})$, it follows that   
\begin{eqnarray}\label{gx}
&&2\sum_{\gamma\in \Gamma_{7}(G_{x})}a_{2}(f(\gamma))
-4\sum_{\gamma\in \Gamma_{6}(K_{3,3})}a_{2}(f(\gamma))
-2\sum_{\gamma\in \Gamma_{5}(G_{x})}a_{2}(f(\gamma))\\
&=&
\sum_{\gamma\in \Gamma_{3,4}^{(2)}(G_{x})}{\rm lk}(f(\gamma))^{2}-1,\nonumber
\end{eqnarray}
\begin{eqnarray}\label{gy}
&&2\sum_{\gamma\in \Gamma_{7}(G_{y})}a_{2}(f(\gamma))
-4\sum_{\gamma\in \Gamma_{6}(K_{3,3})}a_{2}(f(\gamma))
-2\sum_{\gamma\in \Gamma_{5}(G_{y})}a_{2}(f(\gamma))\\
&=&
\sum_{\gamma\in \Gamma_{3,4}^{(2)}(G_{y})}{\rm lk}(f(\gamma))^{2}-1. \nonumber
\end{eqnarray}

\begin{figure}[htbp]
      \begin{center} 
\scalebox{0.5}{\includegraphics*{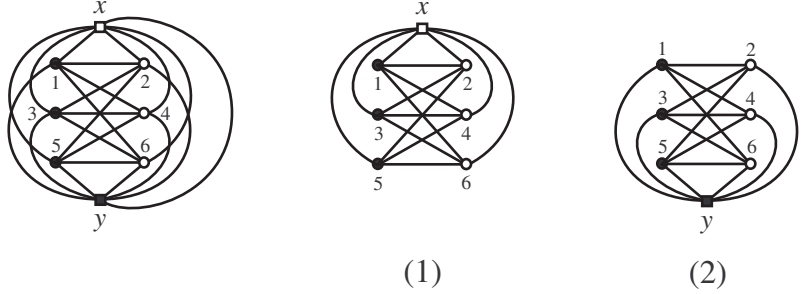}}
      \end{center}
   \caption{(1) $G_{x}$, (2) $G_{y}$}
  \label{GxGy}
\end{figure}

Let $\gamma$ be an element in $\Gamma(K_{3,3,1,1})$ which is a $8$-cycle or a $6$-cycle containing $x$ and $y$. Then we say that $\gamma$ is of {\it Type A} if the neighbor vertices of $x$ in $\gamma$ consist of both a black vertex and a white vertex (if and only if the neighbor vertices of $y$ in $\gamma$ consist of both a black vertex and a white vertex), of {\it Type B} if the neighbor vertices of $x$ in $\gamma$ consist of only black (resp. white) vertices and the neighbor vertices of $y$ in $\gamma$ consist of only white (resp. black) vertices, and of {\it Type C} if $\gamma$ contains the edge $\overline{xy}$. Moreover, we say that an element $\gamma$ in $\Gamma_{6}(K_{3,3,1,1})$ containing $x$ and $y$ is of {\it Type D} if the neighbor vertices of $x$ and $y$ in $\gamma$ consist of only black or only white vertices. Note that any element in $\Gamma_{8}(K_{3,3,1,1})$ is of Type A, B or C, and any element in $\Gamma_{6}(K_{3,3,1,1})$ containing $x$ and $y$ is of Type A, B, C or D. On the other hand, let $\lambda$ be an element in $\Gamma_{4,4}^{(2)}(K_{3,3,1,1})$. Then we say that $\lambda$ is of {\it Type A} if $\lambda$ does not contain the edge $\overline{xy}$ and both $x$ and $y$ are contained in either connected component of $\lambda$, of {\it Type B} if $x$ and $y$ are contained in different connected components of $\lambda$, and of {\it Type C} if $\lambda$ contains the edge $\overline{xy}$. Note that any element in $\Gamma_{4,4}^{(2)}(K_{3,3,1,1})$ is of Type A, B or C. Then the following three lemmas hold. 

\begin{Lemma}\label{l1}
For any element $f$ in ${\rm SE}(K_{3,3,1,1})$,  
\begin{eqnarray}\label{fxij13}
&&\sum_{\lambda\in \Gamma_{3,5}^{(2)}(K_{3,3,1,1})}{\rm lk}(f(\lambda))^{2}
+
2\sum_{\substack{
{\lambda\in \Gamma_{4,4}^{(2)}(K_{3,3,1,1})} 
\\ {\rm Type\ A}
}
}{\rm lk}(f(\lambda))^{2} \\
&=&
4\sum_{\substack{
{\gamma\in \Gamma_{8}(K_{3,3,1,1})} 
\\ {\rm Type\ A}
}
}a_{2}(f(\gamma))
-4\left\{
\sum_{\gamma\in \Gamma_{7}(G_{x})}a_{2}(f(\gamma))
+\sum_{\gamma\in \Gamma_{7}(G_{y})}a_{2}(f(\gamma))
\right\}\nonumber\\
&&+8\sum_{\gamma\in \Gamma_{6}(K_{3,3})}a_{2}(f(\gamma))
-4\sum_{\substack{
{\gamma\in \Gamma_{6}(K_{3,3,1,1})} 
\\ x,y\in \gamma,\ {\rm Type\ A}
}
}a_{2}(f(\gamma))\nonumber\\
&&-4\left\{
\sum_{\gamma\in \Gamma_{5}(G_{x})}a_{2}(f(\gamma))
+\sum_{\gamma\in \Gamma_{5}(G_{y})}a_{2}(f(\gamma))
\right\}
+10.  \nonumber
\end{eqnarray}
\end{Lemma}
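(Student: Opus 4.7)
The plan is to derive Lemma \ref{l1} from the two instances (\ref{gx}) and (\ref{gy}) of O'Donnol's formula (Theorem \ref{petersen_refine_pq}\,(2)) applied to the $P_7$-subgraphs $G_x$ and $G_y$, combined with a ``smoothing'' identity that promotes cycles and links lying inside $G_x$ or $G_y$ to those Type A cycles and links of $K_{3,3,1,1}$ that pass through both $x$ and $y$.

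First, I would multiply each of (\ref{gx}) and (\ref{gy}) by $2$ and add them. The resulting $a_2$-terms already match three of the sums on the right-hand side of (\ref{fxij13}) (those over $\Gamma_{7}(G_{x})\cup\Gamma_{7}(G_{y})$, $\Gamma_{6}(K_{3,3})$, and $\Gamma_{5}(G_{x})\cup\Gamma_{5}(G_{y})$, with coefficients $-4$, $+8$, and $-4$ respectively), while the ${\rm lk}^{2}$-terms range over $\Gamma_{3,4}^{(2)}(G_{x})\cup\Gamma_{3,4}^{(2)}(G_{y})$ and the additive constant is $-4$. What is missing from (\ref{fxij13}) after this step is the conversion of the $\Gamma_{3,4}^{(2)}(G_{x})\cup\Gamma_{3,4}^{(2)}(G_{y})$ ${\rm lk}^{2}$-sum into the Type A $\Gamma_{3,5}^{(2)}$ and $\Gamma_{4,4}^{(2)}$ ${\rm lk}^{2}$-sums, the creation of the $+4\sum a_{2}$ over Type A 8-cycles and $-4\sum a_{2}$ over Type A 6-cycles through $x,y$, and an additional $+14$ in the constant term.

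Second, I would supply these missing pieces via a smoothing/IH-move identity. For a Type A 8-cycle $\gamma\in\Gamma_{8}(K_{3,3,1,1})$ whose $x$-neighbors are $a$ (black) and $b$ (white), replacing the subpath $axb$ by the chord $\overline{ab}\in E(K_{3,3})$ produces a 7-cycle $\gamma'\in\Gamma_{7}(G_{y})$; the bypassed triangle $\overline{xa}\cup\overline{xb}\cup\overline{ab}$ lies in $\Gamma_{3}(G_{x})$, and a standard spatial-graph identity (of the type underlying the proofs of Theorems \ref{petersen_refine} and \ref{petersen_refine_pq}) relates $a_{2}(f(\gamma))-a_{2}(f(\gamma'))$ to a linking number of this triangle with the complementary arc. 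Applying this identity once at $x$ and once at $y$ to each Type A 8-cycle, and analogously to each Type A 6-cycle containing both $x,y$ (which reduces to a 5-cycle of $G_{x}$ or $G_{y}$ after one bypass), rewrites $4\sum_{\Gamma_{8}\text{ Type A}}a_{2}$ and $-4\sum_{\Gamma_{6}\text{ Type A through }x,y}a_{2}$ in terms of the $a_{2}$-sums over $\Gamma_{7}(G_{x})\cup\Gamma_{7}(G_{y})$ and $\Gamma_{5}(G_{x})\cup\Gamma_{5}(G_{y})$ already present in Step 1, plus ${\rm lk}^{2}$-contributions that, when combined with the doubled $\Gamma_{3,4}^{(2)}$-sum from Step 1, assemble into exactly $\sum_{\Gamma_{3,5}^{(2)}}{\rm lk}^{2}+2\sum_{\Gamma_{4,4}^{(2)}\text{ Type A}}{\rm lk}^{2}$.

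The hard part is the combinatorial bookkeeping: each Type A cycle or two-component link of $K_{3,3,1,1}$ must be produced with the correct multiplicity by the smoothings at $x$ and at $y$, each 5-, 6- or 7-cycle of $G_{x}\cup G_{y}$ is reached by several different expansions, and the numerical constants must combine to the precise additive $+10$ in (\ref{fxij13}) (the $-4$ from the doubled $-1$'s of (\ref{gx})+(\ref{gy}) together with $+14$ from the constant contributions of the bypass identities at $x$ and $y$). The Type A hypothesis is essential throughout: it is precisely the condition ensuring that $\overline{ab}$ lies in $K_{3,3}$, so that the smoothing at $x$ lands inside $G_{y}$, and symmetrically for $y$.
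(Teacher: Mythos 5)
There is a genuine gap, and it sits exactly at the step you label as ``a standard spatial-graph identity.'' Replacing the subpath $\overline{ax}\cup\overline{xb}$ of a Type A $8$-cycle $\gamma$ by the chord $\overline{ab}$ produces a theta-curve $P\cup\triangle$ (where $P$ is the complementary arc and $\triangle=\overline{xa}\cup\overline{xb}\cup\overline{ab}$), and a theta-curve contains no pair of disjoint cycles, hence no linking number at all; ``the linking number of this triangle with the complementary arc'' is not defined, and no local identity expresses $a_{2}(f(\gamma))-a_{2}(f(\gamma'))$ in such terms (one can knot the edge $\overline{ab}$ around $P$ without changing $f(\gamma)$, altering the difference arbitrarily). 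The identities underlying Theorems \ref{petersen_refine} and \ref{petersen_refine_pq} are global statements about sums over all cycles of a whole $P_{7}$- or $Q_{8}$-subgraph, not local bypass relations, so the engine your proof relies on does not exist in the form you invoke. Moreover, your Step 1 bookkeeping is already off: doubling (\ref{gx}) and (\ref{gy}) and adding yields the $\Gamma_{7}(G_{x})\cup\Gamma_{7}(G_{y})$ sum with coefficient $+4$ and the $\Gamma_{6}(K_{3,3})$ sum with coefficient $-16$ (once everything is arranged as in (\ref{fxij13})), whereas the lemma requires $-4$ and $+8$; in the actual proof the formulae (\ref{gx}) and (\ref{gy}) must be \emph{subtracted} with multiplicity, not added. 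Finally, the entire content of the lemma is the multiplicity count that you defer as ``the hard part,'' so nothing checkable remains.

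For comparison, the paper's mechanism is the global version of your bypass idea: for each edge $\overline{ij}$ of $K_{3,3}$ it forms the subgraph $F_{x}^{(ij)}=(G_{x}-\overline{ij})\cup\overline{iy}\cup\overline{jy}$ (and symmetrically $F_{y}^{(ij)}$), each homeomorphic to $P_{7}$ and containing the Type A $8$-cycles through that configuration as its Hamiltonian cycles. Applying Theorem \ref{petersen_refine_pq} (2) to all nine $F_{x}^{(ij)}$, summing, counting how many of these subgraphs contain each cycle or link (once for Type A $8$-cycles, four times for $7$-cycles of $G_{x}$, three times for $6$-cycles of $K_{3,3}$, six times for $5$-cycles of $G_{x}$, etc.), and then eliminating the residual $\Gamma_{3,4}^{(2)}(G_{x})$ term using (\ref{gx}) produces (\ref{fxij11}); the symmetric computation gives (\ref{fxij12}), and adding the two yields the lemma. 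If you want to salvage your outline, you should replace the local smoothing identity by this application of O'Donnol's formula to the subgraphs $F_{x}^{(ij)}$ and $F_{y}^{(ij)}$ and carry out the multiplicity count explicitly.
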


\begin{proof}
For $i=1,3,5$ and $j=2,4,6$, let us consider subgraphs $F_{x}^{(ij)}=(G_{x}-\overline{ij})\cup \overline{iy}\cup \overline{jy}$ and $F_{y}^{(ij)}=(G_{y}-\overline{ij})\cup \overline{ix}\cup \overline{jx}$ of $K_{3,3,1,1}$ as illustrated in Fig. \ref{FxFy} (1) and (2), respectively. Since each of $F_{x}^{(ij)}$ and $F_{y}^{(ij)}$ is homeomorphic to $P_{7}$, by applying Theorem \ref{petersen_refine_pq} (2) to $f|_{F_{x}^{(ij)}}$, it follows that 
\begin{eqnarray}\label{fxij}
&&\sum_{\substack{
{\lambda=\gamma\cup \gamma'\in \Gamma_{3,5}^{(2)}(F_{x}^{(ij)})} 
\\ \gamma\in \Gamma_{3}(F_{x}^{(ij)}),\ \gamma'\in \Gamma_{5}(F_{x}^{(ij)})
\\ x\in \gamma,\ y\in \gamma'
}
}{\rm lk}(f(\lambda))^{2}
+\sum_{\substack{
{\lambda=\gamma\cup \gamma'\in \Gamma_{4,4}^{(2)}(F_{x}^{(ij)})} 
\\ x,y\in \gamma'
}
}{\rm lk}(f(\lambda))^{2}\\
&&+\sum_{\substack{
{\lambda=\gamma\cup \gamma'\in \Gamma_{3,4}^{(2)}(G_{x})} 
\\ \gamma\in \Gamma_{3}(G_{x}),\ \gamma'\in \Gamma_{4}(G_{x})
\\ \overline{ij}\not\subset\lambda,\ x\in \gamma
}
}{\rm lk}(f(\lambda))^{2}\nonumber\\
&=&
2\left\{
\sum_{\gamma\in \Gamma_{8}(F_{x}^{(ij)})}a_{2}(f(\gamma))
+\sum_{\substack{
{\gamma\in \Gamma_{7}(G_{x})} 
\\ \overline{ij}\not\subset\gamma
}
}a_{2}(f(\gamma))
\right\}\nonumber\\
&&-4\left\{
\sum_{\substack{
{\gamma\in \Gamma_{7}(F_{x}^{(ij)})} 
\\ x\not\in \gamma,\ y\in \gamma
}
}a_{2}(f(\gamma))
+\sum_{\substack{
{\gamma\in \Gamma_{6}(K_{3,3})} 
\\ \overline{ij}\not\subset\gamma
}
}a_{2}(f(\gamma))
\right\}\nonumber\\
&&
-2\left\{
\sum_{\substack{
{\gamma\in \Gamma_{6}(F_{x}^{(ij)})} 
\\ x,y\in \gamma
}
}a_{2}(f(\gamma))
+\sum_{\substack{
{\gamma\in \Gamma_{5}(G_{x})} 
\\ \overline{ij}\not\subset\gamma
}
}a_{2}(f(\gamma))
\right\}
+1. \nonumber
\end{eqnarray}

\begin{figure}[htbp]
      \begin{center} 
\scalebox{0.5}{\includegraphics*{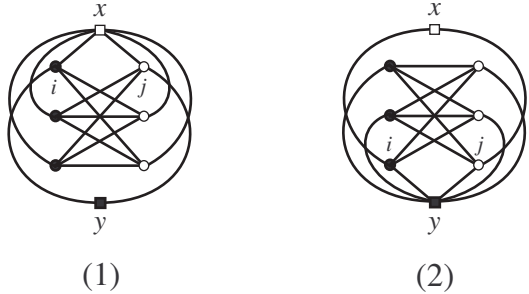}}
      \end{center}
   \caption{(1) $F_{x}^{(ij)}$, (2) $F_{y}^{(ij)}$ $(i=1,3,5,\ j=2,4,6)$}
  \label{FxFy}
\end{figure}

Let us take the sum of both sides of (\ref{fxij}) over $i=1,3,5$ and $j=2,4,6$. For an element $\gamma$ in $\Gamma_{8}(K_{3,3,1,1})$ of Type A, there uniquely exists $F_{x}^{(ij)}$ containing $\gamma$. This implies that  
\begin{eqnarray}\label{fxij1}
\sum_{i,j}\left(
\sum_{\gamma\in \Gamma_{8}(F_{x}^{(ij)})}a_{2}(f(\gamma))
\right)
=\sum_{\substack{
{\gamma\in \Gamma_{8}(K_{3,3,1,1})} 
\\ {\rm Type\ A} 
}
}a_{2}(f(\gamma)). 
\end{eqnarray}
For an element $\gamma$ of $\Gamma_{7}(G_{x})$, there exist exactly four edges of $K_{3,3}$ which are not contained in $\gamma$. Thus $\gamma$ is common for exactly four $F_{x}^{(ij)}$'s. This implies that 
\begin{eqnarray}\label{fxij2}
\sum_{i,j}
\left(
\sum_{\substack{
{\gamma\in \Gamma_{7}(G_{x})} 
\\ \overline{ij}\not\subset\gamma
}
}a_{2}(f(\gamma))
\right)
=4\sum_{\Gamma_{7}(G_{x})}a_{2}(f(\gamma)). 
\end{eqnarray}
For an element $\gamma$ in $\Gamma_{7}(G_{y})$, there uniquely exists $F_{x}^{(ij)}$ containing $\gamma$. This implies that 
\begin{eqnarray}\label{fxij3}
\sum_{i,j}
\left(
\sum_{\substack{
{\gamma\in \Gamma_{7}(F_{x}^{(ij)})} 
\\ x\not\in \gamma,\ y\in \gamma
}
}a_{2}(f(\gamma))
\right)
=\sum_{\gamma\in \Gamma_{7}(G_{y})}a_{2}(f(\gamma)). 
\end{eqnarray}
For an element $\gamma$ in $\Gamma_{6}(K_{3,3})$, there exist exactly three edges of $K_{3,3}$ which are not contained in $\gamma$. Thus $\gamma$ is common for exactly three $F_{x}^{(ij)}$'s. This implies that 
\begin{eqnarray}\label{fxij4}
\sum_{i,j}
\left(
\sum_{\substack{
{\gamma\in \Gamma_{6}(K_{3,3})} 
\\ \overline{ij}\not\subset\gamma
}
}a_{2}(f(\gamma))
\right)
=3\sum_{\gamma\in\Gamma_{6}(K_{3,3})}a_{2}(f(\gamma)). 
\end{eqnarray}
For an element $\gamma$ in $\Gamma_{6}(K_{3,3,1,1})$ containing $x$ and $y$, if $\gamma$ is of Type A, then there uniquely exists $F_{x}^{(ij)}$ containing $\gamma$. This implies that  
\begin{eqnarray}\label{fxij5}
\sum_{i,j}
\left(
\sum_{\substack{
{\gamma\in \Gamma_{6}(F_{x}^{(ij)})} 
\\ x,y\in \gamma
}
}a_{2}(f(\gamma))
\right)
=
\sum_{\substack{
{\gamma\in \Gamma_{6}(K_{3,3,1,1})} 
\\ x,y\in \gamma,\ {\rm Type\ A}
}
}a_{2}(f(\gamma)). 
\end{eqnarray}
For an element $\gamma$ in $\Gamma_{5}(G_{x})$, there exist exactly six edges of $K_{3,3}$ which are not contained in $\gamma$. Thus $\gamma$ is common for exactly six $F_{x}^{(ij)}$'s. This implies that 
\begin{eqnarray}\label{fxij6}
\sum_{i,j}
\left(
\sum_{\substack{
{\gamma\in \Gamma_{5}(G_{x})} 
\\ \overline{ij}\not\subset\gamma
}
}a_{2}(f(\gamma))
\right)
=
6\sum_{\gamma\in \Gamma_{5}(G_{x})}a_{2}(f(\gamma)). 
\end{eqnarray}

For an element $\lambda=\gamma\cup \gamma'$ in $\Gamma_{3,5}^{(2)}(K_{3,3,1,1})$ where $\gamma$ is a $3$-cycle and $\gamma'$ is a $5$-cycle, if $\gamma$ contains $x$ and $\gamma'$ contains $y$, then there uniquely exists $F_{x}^{(ij)}$ containing $\lambda$. This implies that 
\begin{eqnarray}\label{fxij7}
&&\sum_{i,j}
\left(
\sum_{\substack{
{\lambda=\gamma\cup \gamma'\in \Gamma_{3,5}^{(2)}(F_{x}^{(ij)})} 
\\ \gamma\in \Gamma_{3}(F_{x}^{(ij)}),\ \gamma'\in \Gamma_{5}(F_{x}^{(ij)})
\\ x\in \gamma,\ y\in \gamma'
}
}{\rm lk}(f(\lambda))^{2}
\right)\\
&=&
\sum_{\substack{
{\lambda=\gamma\cup \gamma'\in \Gamma_{3,5}^{(2)}(K_{3,3,1,1})} 
\\ \gamma\in \Gamma_{3}(K_{3,3,1,1}),\ \gamma'\in \Gamma_{5}(K_{3,3,1,1})
\\ x\in \gamma,\ y\in \gamma'
}
}{\rm lk}(f(\lambda))^{2}. \nonumber
\end{eqnarray}
For an element $\lambda$ in $\Gamma_{4,4}^{(2)}(K_{3,3,1,1})$ of Type A, there uniquely exists $F_{x}^{(ij)}$ containing $\lambda$. This implies that  
\begin{eqnarray}\label{fxij8}
\sum_{i,j}
\left(
\sum_{\substack{
{\lambda=\gamma\cup \gamma'\in \Gamma_{4,4}^{(2)}(F_{x}^{(ij)})} 
\\ x,y\in \gamma'
}
}{\rm lk}(f(\lambda))^{2}
\right)
=
\sum_{\substack{
{\lambda\in \Gamma_{4,4}^{(2)}(K_{3,3,1,1})} 
\\ {\rm Type\ A}
}
}{\rm lk}(f(\lambda))^{2}. 
\end{eqnarray}
For an element $\lambda$ in $\Gamma_{3,4}^{(2)}(G_{x})$, there exist exactly four edges of $K_{3,3}$ which are not contained in $\lambda$. Thus $\lambda$ is common for exactly four $F_{x}^{(ij)}$'s. This implies that 
\begin{eqnarray}\label{fxij9}
\sum_{i,j}
\left(
\sum_{\substack{
{\lambda=\gamma\cup \gamma'\in \Gamma_{3,4}^{(2)}(G_{x})} 
\\ \gamma\in \Gamma_{3}(G_{x}),\ \gamma'\in \Gamma_{4}(G_{x})
\\ \overline{ij}\not\subset\lambda,\ x\in \gamma
}
}{\rm lk}(f(\lambda))^{2}
\right)
=
4\sum_{\lambda\in \Gamma_{3,4}^{(2)}(G_{x})}{\rm lk}(f(\lambda))^{2}. 
\end{eqnarray}
Thus by (\ref{fxij}), (\ref{fxij1}), (\ref{fxij2}), (\ref{fxij3}), (\ref{fxij4}), (\ref{fxij5}), (\ref{fxij6}), (\ref{fxij7}), (\ref{fxij8}) and (\ref{fxij9}), we have 
\begin{eqnarray}\label{fxij10}
&&\sum_{\substack{
{\lambda=\gamma\cup \gamma'\in \Gamma_{3,5}^{(2)}(K_{3,3,1,1})} 
\\ \gamma\in \Gamma_{3}(K_{3,3,1,1}),\ \gamma'\in \Gamma_{5}(K_{3,3,1,1})
\\ x\in \gamma,\ y\in \gamma'
}
}{\rm lk}(f(\lambda))^{2}
+
\sum_{\substack{
{\lambda\in \Gamma_{4,4}^{(2)}(K_{3,3,1,1})} 
\\ {\rm Type\ A}
}
}{\rm lk}(f(\lambda))^{2} \\
&&+4\sum_{\lambda\in \Gamma_{3,4}^{(2)}(G_{x})}{\rm lk}(f(\lambda))^{2}\nonumber\\
&=&
2\sum_{\substack{
{\gamma\in \Gamma_{8}(K_{3,3,1,1})} 
\\ {\rm Type\ A}
}
}a_{2}(f(\gamma))
+8\sum_{\gamma\in \Gamma_{7}(G_{x})}a_{2}(f(\gamma))
-4\sum_{\gamma\in \Gamma_{7}(G_{y})}a_{2}(f(\gamma))\nonumber\\
&&-12\sum_{\gamma\in \Gamma_{6}(K_{3,3})}a_{2}(f(\gamma))
-2\sum_{\substack{
{\gamma\in \Gamma_{6}(K_{3,3,1,1})} 
\\ x,y\in \gamma,\ {\rm Type\ A}
}
}a_{2}(f(\gamma))\nonumber\\
&&-12\sum_{\gamma\in \Gamma_{5}(G_{x})}a_{2}(f(\gamma))
+9.  \nonumber
\end{eqnarray}
Then by combining (\ref{fxij10}) and (\ref{gx}), we have 
\begin{eqnarray}\label{fxij11}
&&\sum_{\substack{
{\lambda=\gamma\cup \gamma'\in \Gamma_{3,5}^{(2)}(K_{3,3,1,1})} 
\\ \gamma\in \Gamma_{3}(K_{3,3,1,1}),\ \gamma'\in \Gamma_{5}(K_{3,3,1,1})
\\ x\in \gamma,\ y\in \gamma'
}
}{\rm lk}(f(\lambda))^{2}
+
\sum_{\substack{
{\lambda\in \Gamma_{4,4}^{(2)}(K_{3,3,1,1})} 
\\ {\rm Type\ A}
}
}{\rm lk}(f(\lambda))^{2} \\
&=&
2\sum_{\substack{
{\gamma\in \Gamma_{8}(K_{3,3,1,1})} 
\\ {\rm Type\ A}
}
}a_{2}(f(\gamma))
-4\sum_{\gamma\in \Gamma_{7}(G_{y})}a_{2}(f(\gamma))
+4\sum_{\gamma\in \Gamma_{6}(K_{3,3})}a_{2}(f(\gamma))\nonumber\\
&&-2\sum_{\substack{
{\gamma\in \Gamma_{6}(K_{3,3,1,1})} 
\\ x,y\in \gamma,\ {\rm Type\ A}
}
}a_{2}(f(\gamma))
-4\sum_{\gamma\in \Gamma_{5}(G_{x})}a_{2}(f(\gamma))
+5. \nonumber
\end{eqnarray}

By applying Theorem \ref{petersen_refine_pq} (2) to $f|_{F_{y}^{(ij)}}$ and combining the same argument as in the case of $F_{x}^{(ij)}$ with (\ref{gy}), we also have 
\begin{eqnarray}\label{fxij12}
&&\sum_{\substack{
{\lambda=\gamma\cup \gamma'\in \Gamma_{3,5}^{(2)}(K_{3,3,1,1})} 
\\ \gamma\in \Gamma_{3}(K_{3,3,1,1}),\ \gamma'\in \Gamma_{5}(K_{3,3,1,1})
\\ y\in \gamma,\ x\in \gamma'
}
}{\rm lk}(f(\lambda))^{2}
+
\sum_{\substack{
{\lambda\in \Gamma_{4,4}^{(2)}(K_{3,3,1,1})} 
\\ {\rm Type\ A}
}
}{\rm lk}(f(\lambda))^{2} \\
&=&
2\sum_{\substack{
{\gamma\in \Gamma_{8}(K_{3,3,1,1})} 
\\ {\rm Type\ A}
}
}a_{2}(f(\gamma))
-4\sum_{\gamma\in \Gamma_{7}(G_{x})}a_{2}(f(\gamma))
+4\sum_{\gamma\in \Gamma_{6}(K_{3,3})}a_{2}(f(\gamma))\nonumber\\
&&-2\sum_{\substack{
{\gamma\in \Gamma_{6}(K_{3,3,1,1})} 
\\ y,x\in \gamma,\ {\rm Type\ A}
}
}a_{2}(f(\gamma))
-4\sum_{\gamma\in \Gamma_{5}(G_{y})}a_{2}(f(\gamma))
+5.  \nonumber
\end{eqnarray}
Then by adding (\ref{fxij11}) and (\ref{fxij12}), we have the result. 
\end{proof}

\begin{Lemma}\label{l2}
For any element $f$ in ${\rm SE}(K_{3,3,1,1})$, 
\begin{eqnarray}\label{q82}
&&\sum_{\substack{
{\lambda\in \Gamma_{4,4}^{(2)}(K_{3,3,1,1})} 
\\ {\rm Type\ B}
}
}{\rm lk}(f(\lambda))^{2}\\
&=&
2\sum_{\substack{
{\gamma\in \Gamma_{8}(K_{3,3,1,1})} 
\\ {\rm Type\ B}
}
}a_{2}(f(\gamma))
+4\sum_{\gamma\in \Gamma_{6}(K_{3,3})}a_{2}(f(\gamma))\nonumber \\
&&-2\left\{\sum_{\substack{
{\gamma\in \Gamma_{6}(G_{x})} 
\\ x\in \gamma
}
}
a_{2}(f(\gamma)) 
+\sum_{\substack{
{\gamma\in \Gamma_{6}(G_{y})} 
\\ y\in \gamma
}
}
a_{2}(f(\gamma))\right\} \nonumber\\
&&-2\sum_{\substack{
{\gamma\in \Gamma_{6}(K_{3,3,1,1})} 
\\ x,y\in \gamma,\ {\rm Type B}
}
}a_{2}(f(\gamma))
+2.  \nonumber
\end{eqnarray}
\end{Lemma}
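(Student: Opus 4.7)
The plan is to mimic the approach of Lemma~\ref{l1}, but using two subgraphs of $K_{3,3,1,1}$ each isomorphic to $Q_{8}$, and invoking Theorem~\ref{petersen_refine_pq}~(1) rather than~(2). First I introduce
\begin{align*}
H  &= K_{3,3,1,1} - \{\overline{xy},\, \overline{x2},\, \overline{x4},\, \overline{x6},\, \overline{y1},\, \overline{y3},\, \overline{y5}\}, \\
H' &= K_{3,3,1,1} - \{\overline{xy},\, \overline{x1},\, \overline{x3},\, \overline{x5},\, \overline{y2},\, \overline{y4},\, \overline{y6}\}.
\end{align*}
Each has $22-7=15$ edges, and the seven edges removed to form $H$ are exactly those of $K_{3,3,1,1}$ whose endpoints lie in the same class of the bipartition $\{1,3,5,y\}\sqcup\{2,4,6,x\}$, together with $\overline{xy}$. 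Hence $H$ is a bipartite graph isomorphic to $K_{4,4}$ with a single edge removed; that is, $H\cong Q_{8}$, with $x$ and $y$ as the two valency-three vertices. The same holds for $H'$ under the bipartition $\{1,3,5,x\}\sqcup\{2,4,6,y\}$.

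Next I apply Theorem~\ref{petersen_refine_pq}~(1) (with $(v,v')=(x,y)$) to $f|_{H}$ and $f|_{H'}$ and add the two resulting identities. The proof then reduces to matching the cycle families appearing in the two applications with the cycle families of $K_{3,3,1,1}$, $K_{3,3}$, $G_{x}$ and $G_{y}$ on the right-hand side of~(\ref{q82}). The bipartite structure of $H$ forces the two neighbours of $x$ in any cycle of $H$ through $x$ to lie in $\{1,3,5\}$ and the two neighbours of $y$ to lie in $\{2,4,6\}$, while $H'$ realizes the opposite colour pattern. From this I would read off the following: (i) $\Gamma_{4,4}^{(2)}(H)$ and $\Gamma_{4,4}^{(2)}(H')$ partition the set of Type~B $(4,4)$-cycles of $K_{3,3,1,1}$; (ii) the Hamiltonian $8$-cycles of $H$ and $H'$ partition the set of Type~B $8$-cycles of $K_{3,3,1,1}$; (iii) the $6$-cycles of $H$ (and of $H'$) avoiding $\{x,y\}$ are exactly $\Gamma_{6}(K_{3,3})$, yielding coefficient $4$ in the sum; (iv) since the two neighbours of $x$ in any $6$-cycle of $G_{x}$ through $x$ must have the same colour (being separated by an even alternating path in $K_{3,3}$), the $6$-cycles of $H$ and $H'$ through $x$ but not $y$ partition all $6$-cycles of $G_{x}$ through $x$, and symmetrically for $y$; (v) the $6$-cycles of $H$ (resp.\ $H'$) through both $x$ and $y$ are precisely the Type~B $6$-cycles with the corresponding colour pattern, and together they exhaust all Type~B $6$-cycles through $x,y$ (a $1{+}3$ split of $x$ and $y$ along the cycle is ruled out, because it would force a single vertex to be adjacent to $x$ as a black and to $y$ as a white simultaneously). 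Substituting these identifications into the sum produces exactly~(\ref{q82}).

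The main obstacle will be the combinatorial bookkeeping in (iv) and (v): I must track the $K_{3,3}$-bipartite colour alternation along each cycle to verify that the $6$-cycles of $G_{x}$ and $G_{y}$ through the apex, and the Type~B $6$-cycles of $K_{3,3,1,1}$ through $\{x,y\}$, are covered \emph{exactly once} by the union of the relevant cycles of $H$ and $H'$, with no overlap or omission. The same bipartite/colour argument drives (i) and (ii) but is simpler, since there the constraint involves only the neighbours of $x$ and $y$.
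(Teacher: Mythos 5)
Your proof is correct and is essentially identical to the paper's: your $H$ and $H'$ are exactly the subgraphs $Q_{8}^{(1)}=K_{3,3}\cup\overline{x1}\cup\overline{x3}\cup\overline{x5}\cup\overline{y2}\cup\overline{y4}\cup\overline{y6}$ and $Q_{8}^{(2)}$ used in the paper, and the paper likewise applies Theorem \ref{petersen_refine_pq} (1) to each and adds the two identities. The cycle-matching facts you list in (i)--(v) are precisely the (implicit) bookkeeping behind the paper's equation (\ref{q81}).
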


\begin{proof}
Let us consider subgraphs $Q_{8}^{(1)}=K_{3,3}\cup \overline{x1}\cup \overline{x3}\cup \overline{x5}\cup \overline{y2}\cup \overline{y4}\cup \overline{y6}$ and $Q_{8}^{(2)}=K_{3,3}\cup \overline{x2}\cup \overline{x4}\cup \overline{x6}\cup \overline{y1}\cup \overline{y3}\cup \overline{y5}$ of $K_{3,3,1,1}$ as illustrated in Fig. \ref{Q8_12} (1) and (2), respectively. Since each of $Q_{8}^{(1)}$ and $Q_{8}^{(2)}$ is homeomorphic to $Q_{8}$, by applying Theorem \ref{petersen_refine_pq} (1) to $f|_{Q_{8}^{(1)}}$ and $f|_{Q_{8}^{(2)}}$, it follows that 
\begin{eqnarray}\label{q81}
\sum_{\lambda\in \Gamma_{4,4}^{(2)}(Q_{8}^{(i)})}{\rm lk}(f(\lambda))^{2}
&=&
2\sum_{\gamma\in \Gamma_{8}(Q_{8}^{(i)})}a_{2}(f(\gamma))
+2\sum_{\gamma\in \Gamma_{6}(K_{3,3})}a_{2}(f(\gamma))\\
&&-2\sum_{\substack{
{\gamma\in \Gamma_{6}(Q_{8}^{(i)})} 
\\ x\in \gamma,\ y\not\in \gamma
}
}a_{2}(f(\gamma))
-2\sum_{\substack{
{\gamma\in \Gamma_{6}(Q_{8}^{(i)})} 
\\ x\not\in \gamma,\ y\in \gamma
}
}a_{2}(f(\gamma))\nonumber\\
&&-2\sum_{\substack{
{\gamma\in \Gamma_{6}(Q_{8}^{(i)})} 
\\ x,y\in \gamma
}
}a_{2}(f(\gamma))
+1 \nonumber
\end{eqnarray}
for $i=1,2$. By adding (\ref{q81}) for $i=1,2$, we have the result. 
\end{proof}

\begin{figure}[htbp]
      \begin{center} 
\scalebox{0.5}{\includegraphics*{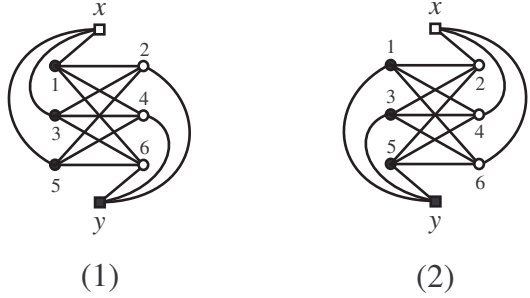}}
      \end{center}
   \caption{(1) $Q_{8}^{(1)}$, (2) $Q_{8}^{(2)}$}
  \label{Q8_12}
\end{figure}

\begin{Lemma}\label{l3}
For any element $f$ in ${\rm SE}(K_{3,3,1,1})$,  
\begin{eqnarray}\label{fxk9}
&&\sum_{\substack{
{\lambda\in \Gamma_{4,4}^{(2)}(K_{3,3,1,1})} 
\\ {\rm Type\ C}
}
}{\rm lk}(f(\lambda))^{2}\\
&=&
2\sum_{\substack{
{\gamma\in \Gamma_{8}(K_{3,3,1,1})} 
\\ {\rm Type\ C}
}
}a_{2}(f(\gamma))-8\sum_{\gamma\in \Gamma_{6}(K_{3,3})}a_{2}(f(\gamma))\nonumber \\
&&-2\sum_{\substack{
{\gamma\in \Gamma_{6}(K_{3,3,1,1})} 
\\ x,y\in \gamma,\ {\rm Type\ C}
}
}a_{2}(f(\gamma))
+2. \nonumber
\end{eqnarray}
\end{Lemma}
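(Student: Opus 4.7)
Following the strategy of Lemmas \ref{l1} and \ref{l2}, the plan is to construct, for each $w\in V(K_{3,3})$, a subgraph of $K_{3,3,1,1}$ homeomorphic to $P_{7}$ in which the $4$-cycles containing $\overline{xy}$ arise from $3$-cycles of the $P_{7}$, and then to apply Theorem \ref{petersen_refine_pq}(2) and sum over $w$.

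Concretely, for each $w\in V(K_{3,3})$ set $H^{(w)}=(G_{y}-\overline{yw})\cup \overline{yx}\cup \overline{xw}$. Then $H^{(w)}$ is a subgraph of $K_{3,3,1,1}$ homeomorphic to $G_{y}\cong P_{7}$, with $x$ playing the role of a subdivision vertex placed on the edge $\overline{yw}$ of $G_{y}$. Under this homeomorphism, a cycle of $G_{y}$ that avoids $\overline{yw}$ corresponds to the same cycle inside $K_{3,3,1,1}$, whereas a cycle of $G_{y}$ that uses $\overline{yw}$ corresponds to the cycle of $K_{3,3,1,1}$ obtained by replacing $\overline{yw}$ with the length-two path $y-x-w$, i.e., a cycle one longer that automatically contains $\overline{xy}$.

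Applying Theorem \ref{petersen_refine_pq}(2) to $f|_{H^{(w)}}$ and summing the six resulting identities over $w\in V(K_{3,3})$, each term then admits a clean interpretation:
\begin{enumerate}
\item Every Hamiltonian cycle $\gamma\in\Gamma_{7}(G_{y})$ has exactly two $K_{3,3}$-neighbors of $y$; for those two values of $w$ the cycle becomes a Type C $8$-cycle of $K_{3,3,1,1}$, while for the other four values it remains $\gamma$. Every Type C $8$-cycle arises exactly once in this way (recovered by contracting $x$).
\item Analogously, every $\gamma\in\Gamma_{5}(G_{y})$ produces, for two values of $w$, two distinct Type C $6$-cycles of $K_{3,3,1,1}$ containing $\{x,y\}$, and stays $\gamma$ for the other four values; every such Type C $6$-cycle is obtained exactly once.
\item Every $\gamma\in\Gamma_{6}(G_{y})$ with $y\notin\gamma$ lies in $K_{3,3}$ and is unaffected by the subdivision, hence is counted six times.
\item Every $(3,4)$-pair $\lambda=\{yvu,C\}\in\Gamma_{3,4}^{(2)}(G_{y})$, where $yvu$ is the $3$-cycle and $C$ the unique $4$-cycle on $K_{3,3}\setminus\{v,u\}$, becomes a Type C $(4,4)$-pair of $K_{3,3,1,1}$ for $w\in\{v,u\}$ (the two choices producing the two distinct $4$-cycles on $\{x,y,v,u\}$ that contain $\overline{xy}$) and is unchanged for the other four values of $w$. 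Every Type C $(4,4)$-pair of $K_{3,3,1,1}$ arises exactly once in this way.
\end{enumerate}

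Translating these correspondences turns the summed identity into an equation whose Type C sums already match the left-hand side of (\ref{fxk9}), but which still contains the $a_{2}$-sums over $\Gamma_{7}(G_{y})$ and $\Gamma_{5}(G_{y})$ and the $\mathrm{lk}^{2}$-sum over $\Gamma_{3,4}^{(2)}(G_{y})$. To eliminate these residual $G_{y}$-terms I subtract four times the identity obtained by applying Theorem \ref{petersen_refine_pq}(2) directly to $f|_{G_{y}}$; those three sums cancel cleanly and the constants $-6$ and $+4$ combine to give $-2$, producing exactly (\ref{fxk9}).

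The main bookkeeping obstacle is correspondence (4): verifying that as $w$ runs over $\{v,u\}$ the two $4$-cycles on $\{x,y,v,u\}$ produced by the two subdivisions are genuinely distinct (one uses $\overline{xv}$, the other $\overline{xu}$), and that running over all $(3,4)$-pairs of $G_{y}$ and all $w\in V(K_{3,3})$ enumerates each of the $18$ Type C $(4,4)$-pairs of $K_{3,3,1,1}$ with multiplicity exactly one.
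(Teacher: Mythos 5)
Your proof is correct and is essentially the paper's own argument: your $H^{(w)}$ is exactly the paper's $F_y^{(k)}$ (the paper carries out the computation with the symmetric family $F_x^{(k)}=(G_x-\overline{xk})\cup\overline{xy}\cup\overline{ky}$ and explicitly remarks that the $F_y^{(k)}$ version works the same way), and your multiplicity counts, the subtraction of four times the $G_y$-identity, and the constant $-6+4=-2$ all match the paper's equations (2.16)--(2.24).
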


\begin{proof}
For $k=1,2,\ldots,6$, let us consider subgraphs $F_{x}^{(k)}=(G_{x}-\overline{xk})\cup \overline{xy}\cup \overline{ky}$ and $F_{y}^{(k)}=(G_{y}-\overline{yk})\cup \overline{kx}\cup \overline{yx}$ of $K_{3,3,1,1}$ as illustrated in Fig. \ref{FxFy2} (1) and (2), respectively. Since each of $F_{x}^{(k)}$ and $F_{y}^{(k)}$ is also homeomorphic to $P_{7}$, by applying Theorem \ref{petersen_refine_pq} (2) to $f|_{F_{x}^{(k)}}$, it follows that 
\begin{eqnarray}\label{fxk}
&&\sum_{\substack{
{\lambda=\gamma\cup \gamma'\in \Gamma_{4,4}^{(2)}(F_{x}^{(ij)})} 
\\ x,y\in \gamma,\ {\rm Type\ C}
}
}{\rm lk}(f(\lambda))^{2}
+\sum_{\substack{
{\lambda\in \Gamma_{3,4}^{(2)}(G_{x})} 
\\ \overline{xk}\not\subset\lambda
}
}{\rm lk}(f(\lambda))^{2} \\
&=&
2\left\{
\sum_{\gamma\in \Gamma_{8}(F_{x}^{(k)})}a_{2}(f(\gamma))
+\sum_{\substack{
{\gamma\in \Gamma_{7}(G_{x})} 
\\ \overline{xk}\not\subset\gamma
}
}a_{2}(f(\gamma))
\right\}
-4\sum_{\gamma\in \Gamma_{6}(K_{3,3})}a_{2}(f(\gamma))\nonumber\\
&&
-2\left\{
\sum_{\substack{
{\gamma\in \Gamma_{6}(F_{x}^{(k)})} 
\\ x,y\in \gamma
}
}a_{2}(f(\gamma))
+\sum_{\substack{
{\gamma\in \Gamma_{5}(G_{x})} 
\\ \overline{xk}\not\subset\gamma
}
}a_{2}(f(\gamma))
\right\}
+1. \nonumber
\end{eqnarray}

\begin{figure}[htbp]
      \begin{center} 
\scalebox{0.5}{\includegraphics*{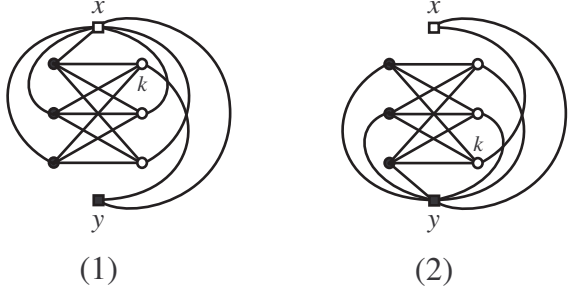}}
      \end{center}
   \caption{(1) $F_{x}^{(k)}$, (2) $F_{y}^{(k)}$ $(k=1,2,3,4,5,6)$}
  \label{FxFy2}
\end{figure}

Let us take the sum of both sides of (\ref{fxk}) over $k=1,2,\ldots,6$. For an element $\gamma$ in $\Gamma_{8}(K_{3,3,1,1})$, if $\gamma$ is of Type C, then there uniquely exists $F_{x}^{(k)}$ containing $\gamma$. This implies that  
\begin{eqnarray}\label{fxk1}
\sum_{k}\left(
\sum_{\gamma\in \Gamma_{8}(F_{x}^{(k)})}a_{2}(f(\gamma))
\right)
=\sum_{\substack{
{\gamma\in \Gamma_{8}(K_{3,3,1,1})} 
\\ {\rm Type\ C} 
}
}a_{2}(f(\gamma)). 
\end{eqnarray}
For an element $\gamma$ of $\Gamma_{7}(G_{x})$, there exist exactly four edges which are incident to $x$ such that they are not contained in $\gamma$. Thus $\gamma$ is common for exactly four $F_{x}^{(k)}$'s. This implies that 
\begin{eqnarray}\label{fxk2}
\sum_{k}
\left(
\sum_{\substack{
{\gamma\in \Gamma_{7}(G_{x})} 
\\ \overline{xk}\not\subset\gamma
}
}a_{2}(f(\gamma)). 
\right)
=4\sum_{\Gamma_{7}(G_{x})}a_{2}(f(\gamma)). 
\end{eqnarray}
It is clear that any element $\gamma$ in $\Gamma_{6}(K_{3,3})$ is common for exactly six $F_{x}^{(k)}$'s. This implies that 
\begin{eqnarray}\label{fxk3}
\sum_{k}
\left(
\sum_{\gamma\in \Gamma_{6}(K_{3,3})}a_{2}(f(\gamma))
\right)
=6\sum_{\gamma\in\Gamma_{6}(K_{3,3})}a_{2}(f(\gamma)).
\end{eqnarray}
For an element $\gamma$ in $\Gamma_{6}(K_{3,3,1,1})$ containing $x$ and $y$, if $\gamma$ is of Type C, then there uniquely exists $F_{x}^{(k)}$ containing $\gamma$. This implies that  
\begin{eqnarray}\label{fxk4}
\sum_{k}
\left(
\sum_{\substack{
{\gamma\in \Gamma_{6}(F_{x}^{(k)})} 
\\ x,y\in \gamma
}
}a_{2}(f(\gamma))
\right)
=
\sum_{\substack{
{\gamma\in \Gamma_{6}(K_{3,3,1,1})} 
\\ x,y\in \gamma,\ {\rm Type\ C}
}
}a_{2}(f(\gamma)). 
\end{eqnarray}
For an element $\gamma$ of $\Gamma_{5}(G_{x})$, there exist exactly four edges which are incident to $x$ such that they are not contained in $\gamma$. Thus $\gamma$ is common for exactly four $F_{x}^{(k)}$'s. This implies that 
\begin{eqnarray}\label{fxk5}
\sum_{k}
\left(
\sum_{\substack{
{\gamma\in \Gamma_{5}(G_{x})} 
\\ \overline{xk}\not\subset\gamma
}
}a_{2}(f(\gamma))
\right)
=
4\sum_{\gamma\in \Gamma_{5}(G_{x})}a_{2}(f(\gamma)). 
\end{eqnarray}
For an element $\lambda=\gamma\cup \gamma'$ in $\Gamma_{4,4}^{(2)}(K_{3,3,1,1})$, if $\lambda$ is of Type C, then there uniquely exists $F_{x}^{(k)}$ containing $\lambda$. This implies that 
\begin{eqnarray}\label{fxk6}
\sum_{k}
\left(
\sum_{\substack{
{\lambda=\gamma\cup \gamma'\in \Gamma_{4,4}^{(2)}(F_{x}^{(k)})} 
\\ x,y\in \gamma,\ {\rm Type C}
}
}{\rm lk}(f(\lambda))^{2}
\right)
=
\sum_{\substack{
{\lambda\in \Gamma_{4,4}^{(2)}(K_{3,3,1,1})} 
\\ {\rm Type\ C}
}
}{\rm lk}(f(\lambda))^{2}. 
\end{eqnarray}
For an element $\lambda$ in $\Gamma_{3,4}^{(2)}(G_{x})$, there exist exactly four edges which are incident to $x$ such that they are not contained in $\lambda$. Thus $\lambda$ is common for exactly four $F_{x}^{(k)}$'s. This implies that 
\begin{eqnarray}\label{fxk7}
\sum_{k}
\left(
\sum_{\substack{
{\lambda\in \Gamma_{3,4}^{(2)}(G_{x})} 
\\ \overline{xk}\not\subset\lambda
}
}{\rm lk}(f(\lambda))^{2}
\right)
=
4\sum_{\lambda\in \Gamma_{3,4}^{(2)}(G_{x})}{\rm lk}(f(\lambda))^{2}. 
\end{eqnarray}
Then by (\ref{fxk}), (\ref{fxk1}), (\ref{fxk2}), (\ref{fxk3}), (\ref{fxk4}), (\ref{fxk5}), (\ref{fxk6}) and (\ref{fxk7}), we have 
\begin{eqnarray}\label{fxk8}
&&
\sum_{\substack{
{\lambda\in \Gamma_{4,4}^{(2)}(K_{3,3,1,1})} 
\\ {\rm Type\ C}
}
}{\rm lk}(f(\lambda))^{2}
+
4\sum_{\lambda\in \Gamma_{3,4}^{(2)}(G_{x})}{\rm lk}(f(\lambda))^{2} \\
&=&
2\sum_{\substack{
{\gamma\in \Gamma_{8}(K_{3,3,1,1})} 
\\ {\rm Type\ C}
}
}a_{2}(f(\gamma))
+8\sum_{\gamma\in \Gamma_{7}(G_{x})}a_{2}(f(\gamma))
-24\sum_{\gamma\in \Gamma_{6}(K_{3,3})}a_{2}(f(\gamma))\nonumber\\
&&-2\sum_{\substack{
{\gamma\in \Gamma_{6}(K_{3,3,1,1})} 
\\ x,y\in \gamma,\ {\rm Type\ C}
}
}a_{2}(f(\gamma))
-8\sum_{\gamma\in \Gamma_{5}(G_{x})}a_{2}(f(\gamma))
+6.  \nonumber
\end{eqnarray}
Then by combining (\ref{fxk8}) and (\ref{gx}), we have the reslut. We remark here that by by applying Theorem \ref{petersen_refine_pq} (2) to $f|_{F_{y}^{(k)}}$ combining the same argument as in the case of $F_{x}^{(k)}$ with (\ref{gy}), we also have (\ref{fxk9}). 
\end{proof}

\begin{proof}[Proof of Theorem \ref{main_theorem_k3311}.] 
(1) Let $f$ be an element in ${\rm SE}(K_{3,3,1,1})$. Then by combining (\ref{fxij13}), (\ref{q82}) and (\ref{fxk9}), we have 
\begin{eqnarray}\label{total1}
&&\sum_{\lambda\in \Gamma_{3,5}^{(2)}(K_{3,3,1,1})}{\rm lk}(f(\lambda))^{2}
+2\sum_{\lambda\in \Gamma_{4,4}^{(2)}(K_{3,3,1,1})}{\rm lk}(f(\lambda))^{2} \\
&=&
4\sum_{\gamma\in \Gamma_{8}(K_{3,3,1,1})}a_{2}(f(\gamma))
-4\left\{
\sum_{\gamma\in \Gamma_{7}(G_{x})}a_{2}(f(\gamma))
+\sum_{\gamma\in \Gamma_{7}(G_{y})}a_{2}(f(\gamma))
\right\}\nonumber\\
&&-4\left\{
\sum_{\substack{
{\gamma\in \Gamma_{6}(G_{x})} 
\\ x\in \gamma
}
}a_{2}(f(\gamma))
+\sum_{\substack{
{\gamma\in \Gamma_{6}(G_{y})} 
\\ y\in \gamma
}
}a_{2}(f(\gamma))
+
\sum_{\substack{
{\gamma\in \Gamma_{6}(K_{3,3,1,1})} 
\\ x,y\in \gamma
\\ {\rm Type\ A,B,C}
}
}
a_{2}(f(\gamma))\right\} \nonumber\\
&&-4\left\{
\sum_{\gamma\in \Gamma_{5}(G_{x})}a_{2}(f(\gamma))
+\sum_{\gamma\in \Gamma_{5}(G_{y})}a_{2}(f(\gamma))
\right\}
+18. \nonumber
\end{eqnarray}
Note that 
\begin{eqnarray*}
\Gamma_{k}(G_{x})\cup \Gamma_{k}(G_{y})=\left\{\gamma\in \Gamma_{k}(K_{3,3,1,1})\ |\ \left\{x,y\right\}\not\subset\gamma\right\}
\end{eqnarray*}
 for $k=5,7$. Moreover, we define a subset $\Gamma_{6}'$ of $\Gamma_{6}(K_{3,3,1,1})$ by 
\begin{eqnarray}\label{gamma6}
\Gamma_{6}'&=&\left\{\gamma\in \Gamma_{6}(G_{x})\ |\ x\in\gamma\right\}\cup \left\{\gamma\in \Gamma_{6}(G_{y})\ |\ y\in\gamma\right\}\\
&&\cup\left\{\gamma\in \Gamma_{6}(K_{3,3,1,1})\ |\ x,y\in \gamma,\ \gamma{\rm\ is\ of\ Type\ A,B\ or\ C}\right\}. \nonumber
\end{eqnarray}
Then we see that (\ref{total1}) implies (\ref{maineq}).

(2) Let $f$ be an element in ${\rm SE}(K_{3,3,1,1})$. Let us consider subgraphs $H_{1}=Q_{8}^{(1)}\cup \overline{xy}$ and $H_{2}=Q_{8}^{(2)}\cup \overline{xy}$ of $K_{3,3,1,1}$ as illustrated in Fig. \ref{H1H2} (1) and (2), respectively. For $i=1,2$, $H_{i}$ has the proper minor $H'_{i}=H_{i}/\overline{xy}$ which is isomorphic to $P_{7}$. For a spatial embedding $f|_{H_{i}}$ of $H_{i}$, there exists a spatial embedding $f'$ of $H'_{i}$ such that $f'(H'_{i})$ is obtained from $f(H_{i})$ by contracting $f(\overline{xy})$ into one point. Note that this embedding is unique up to ambient isotopy in ${\mathbb R}^{3}$. Then by Corollary \ref{CG_refine_Do}, there exists an element $\mu'_{i}$ in $\Gamma_{3,4}^{(2)}(H'_{i})$ such that ${\rm lk}(f'(\mu'_{i}))\equiv 1\pmod{2}\ (i=1,2)$. Note that $\mu'_{i}$ is mapped onto an element $\mu_{i}$ in $\Gamma_{4,4}(H_{i})$ by the natural injection from $\Gamma_{3,4}(H'_{i})$ to $\Gamma_{4,4}(H_{i})$. Since $f'(\mu'_{i})$ is ambient isotopic to $f(\mu_{i})$, we have ${\rm lk}(f(\mu_{i}))\equiv 1\pmod{2}\ (i=1,2)$. We also note that both $\mu_{1}$ and $\mu_{2}$ are of Type C in $\Gamma_{4,4}^{(2)}(K_{3,3,1,1})$. 

\begin{figure}[htbp]
      \begin{center} 
\scalebox{0.5}{\includegraphics*{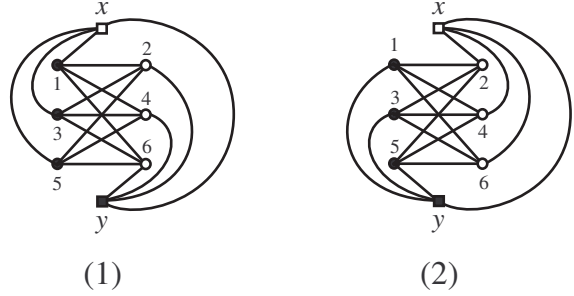}}
      \end{center}
   \caption{(1) $H_{1}$, (2) $H_{2}$}
  \label{H1H2}
\end{figure}

For $v=x,y$ and $i,j,k=1,2,\ldots,6\ (i\neq j)$, let $P_{8}^{(k)}(v;ij)$ be the subgraph of $K_{3,3,1,1}$ as illustrated in Fig. \ref{P8_1234} (1) if $v=y$, $k\in \left\{1,3,5\right\}$ and $i,j\in\left\{2,4,6\right\}$, (2) if $v=y$, $k\in\left\{2,4,6\right\}$ and $i,j\in\left\{1,3,5\right\}$, (3) if $v=x$, $k\in \left\{1,3,5\right\}$ and $i,j\in\left\{2,4,6\right\}$ and (4) if $v=x$, $k\in \left\{2,4,6\right\}$ and $i,j\in\left\{1,3,5\right\}$. Note that there exist exactly thirty six $P_{8}^{(k)}(v;ij)$'s and they are isomorphic to $P_{8}$ in the $K_{6}$-family. Thus by Corollary \ref{CG_refine_Do}, there exists an element $\lambda$ in $\Gamma^{(2)}(P_{8}^{(k)}(v;ij))$ such that ${\rm lk}(f(\lambda))\equiv 1\pmod{2}$. All elements in $\Gamma^{(2)}(P_{8}^{(k)}(v;ij))$ consist of exactly four elements in $\Gamma^{(2)}_{3,5}(P_{8}^{(k)}(v;ij))$ and exactly four elements in $\Gamma^{(2)}_{4,4}(P_{8}^{(k)}(v;ij))$ of Type A or Type B because they do not contain the edge $\overline{xy}$. It is not hard to see that any element in $\Gamma_{3,5}^{(2)}(K_{3,3,1,1})$ is common for exactly two $P_{8}^{(k)}(v;ij)$'s, and any element in $\Gamma_{4,4}^{(2)}(K_{3,3,1,1})$ of Type A or Type B is common for exactly four $P_{8}^{(k)}(v;ij)$'s. 

\begin{figure}[htbp]
      \begin{center} 
\scalebox{0.5}{\includegraphics*{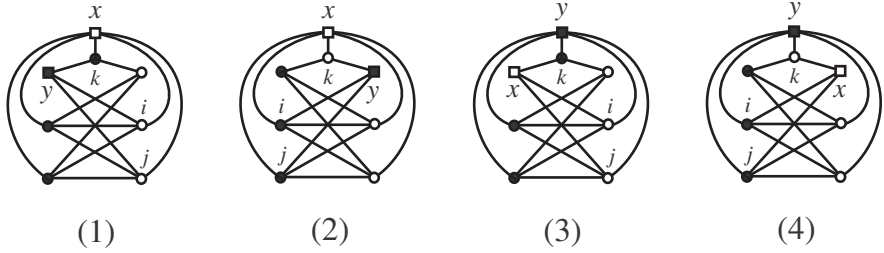}}
      \end{center}
   \caption{$P_{8}^{(k)}(v;ij)$}
  \label{P8_1234}
\end{figure}

By (\ref{fxij13}), there exist a nonnegative integer $m$ such that 
\begin{eqnarray*}
\sum_{\lambda\in \Gamma_{3,5}^{(2)}(K_{3,3,1,1})}{\rm lk}(f(\lambda))^{2}=2m. 
\end{eqnarray*}
If $2m\ge 18$, since there exist at least two elements $\mu_{1}$ and $\mu_{2}$ in $\Gamma_{4,4}^{(2)}(K_{3,3,1,1})$ of Type C such that ${\rm lk}(f(\mu_{i}))\equiv 1\pmod{2}\ (i=1,2)$, we have  
\begin{eqnarray*}
\sum_{\lambda\in \Gamma_{3,5}^{(2)}(K_{3,3,1,1})}{\rm lk}(f(\lambda))^{2}+2\sum_{\lambda\in \Gamma_{4,4}^{(2)}(K_{3,3,1,1})}{\rm lk}(f(\lambda))^{2}
\ge 18+4=22. 
\end{eqnarray*}
If $2m\le 16$, then there exist at least $(36-4m)/4=9-m$ elements in $\Gamma_{4,4}^{(2)}(K_{3,3,1,1})$ of Type A or Type B such that each of the corresponding $2$-component links with respect to $f$ has an odd linking number. Then we have 
\begin{eqnarray*}
\sum_{\lambda\in \Gamma_{3,5}^{(2)}(K_{3,3,1,1})}{\rm lk}(f(\lambda))^{2}+2\sum_{\lambda\in \Gamma_{4,4}^{(2)}(K_{3,3,1,1})}{\rm lk}(f(\lambda))^{2}
\ge 2m+2\left\{(9-m)+2\right\}=22. 
\end{eqnarray*}
This completes the proof. 

\end{proof}

\section{$\triangle Y$-exchange and Conway-Gordon type formulae} 

In this section, we give a proof of Theorem \ref{NT_main_cor}. Let $G_{\triangle}$ and $G_{Y}$ be two graphs such that $G_{Y}$ is obtained from $G_{\triangle}$ by a single $\triangle Y$-exchange. Let $\gamma'$ be an element in $\Gamma(G_{\triangle})$ which does not contain $\triangle$. Then there exists an element $\Phi(\gamma')$ in $\Gamma(G_{Y})$ such that $\gamma'\setminus \triangle=\Phi(\gamma')\setminus Y$. It is easy to see that the correspondence from $\gamma'$ to $\Phi(\gamma')$ defines a surjective map 
\begin{eqnarray*}\label{phi}
\Phi:\Gamma(G_{\triangle})\setminus \left\{\triangle\right\}\longrightarrow \Gamma(G_{Y}).
\end{eqnarray*}
The inverse image of an element $\gamma$ in $\Gamma(G_{Y})$ by $\Phi$ contains at most two elements in $\Gamma(G_{\triangle})\setminus \Gamma_{\triangle}(G_{\triangle})$. Fig. \ref{not_inj3} illustrates the case that the inverse image of $\gamma$ by $\Phi$ consists of exactly two elements. Let $\omega$ be a map from $\Gamma(G_{\triangle})$ to ${\mathbb Z}$. Then we define the map $\tilde{\omega}$ from $\Gamma(G_{Y})$ to ${\mathbb Z}$ by 
\begin{eqnarray}\label{omega_map}
\tilde{\omega}(\gamma)=\sum_{\gamma'\in \Phi^{-1}(\gamma)}\omega(\gamma')
\end{eqnarray}
for an element $\gamma$ in $\Gamma(G_{Y})$. 

\begin{figure}[htbp]
      \begin{center}
\scalebox{0.425}{\includegraphics*{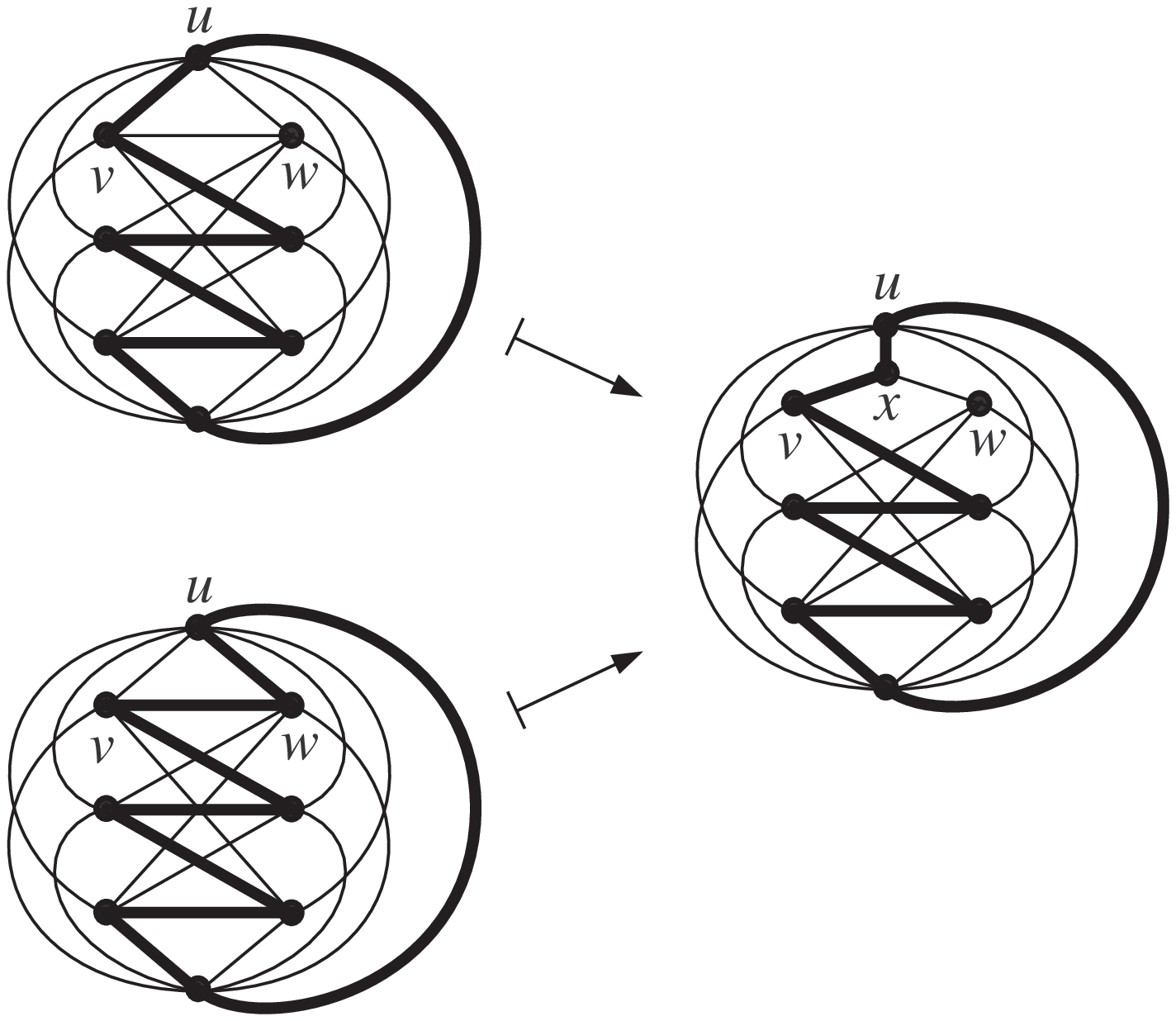}}
      \end{center}
   \caption{}
  \label{not_inj3}
\end{figure} 

Let $f$ be an element in ${\rm SE}(G_{Y})$ and $D$ a $2$-disk in ${\mathbb R}^{3}$ such that $D\cap f(G_{Y})=f(Y)$ and $\partial D \cap f(G_{Y}) = \{f(u),f(v),f(w)\}$. Let $\varphi(f)$ be an element in ${\rm SE}(G_{\triangle})$ such that $\varphi(f)(x)=f(x)$ for $x\in G_{\triangle}\setminus \triangle = G_{Y}\setminus Y$ and $\varphi(f)(G_{\triangle})=\left(f(G_{Y})\setminus f(Y)\right)\cup \partial D$. Thus we obtain a map 
\begin{eqnarray*}\label{varphi}
\varphi:{\rm SE}(G_{Y})\longrightarrow {\rm SE}(G_{\triangle}). 
\end{eqnarray*}
Then we immediately have the following. 
\begin{Proposition}\label{map} 
Let $f$ be an element in ${\rm SE}(G_{Y})$ and $\gamma$ an element in $\Gamma(G_{Y})$. Then, $f(\gamma)$ is ambient isotopic to $\varphi(f)(\gamma')$ for each element $\gamma'$ in the inverse image of $\gamma$ by $\Phi$. 
\end{Proposition}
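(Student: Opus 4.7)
The plan is to construct an explicit ambient isotopy of $\mathbb{R}^{3}$ taking $f(\gamma)$ to $\varphi(f)(\gamma')$, built out of the defining disk $D$, by case analysis on the number $m$ of edges of $\triangle$ contained in $\gamma'$. Since $\gamma' \neq \triangle$, we have $m \in \{0,1,2\}$. If $m=0$, then $\gamma' \subset G_{\triangle} \setminus \triangle = G_{Y} \setminus Y$ and $\gamma = \Phi(\gamma')$ has the same edge set as $\gamma'$, so by the definition of $\varphi(f)$ we get $\varphi(f)(\gamma') = f(\gamma') = f(\gamma)$ and the identity isotopy works.

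For $m \ge 1$, label the edges of $\triangle$ as $\overline{uv}, \overline{vw}, \overline{wu}$ and the three arcs into which $\{f(u),f(v),f(w)\}$ divides $\partial D$ as $\alpha_{uv}, \alpha_{vw}, \alpha_{wu}$. The three arcs $f(\overline{xu}), f(\overline{xv}), f(\overline{xw})$ of $f(Y)$ partition $D$ into three closed sub-disks $D_{uv}, D_{vw}, D_{wu}$, with $\partial D_{uv} = f(\overline{xu}) \cup f(\overline{xv}) \cup \alpha_{uv}$ and similarly for the others. In the case $m=1$, without loss of generality say the unique edge of $\gamma' \cap \triangle$ is $\overline{uv}$, and put $P := \gamma' \setminus \overline{uv}$; in the case $m=2$, say the two edges are $\overline{uv}, \overline{vw}$ and put $P := \gamma' \setminus (\overline{uv} \cup \overline{vw})$, which is an arc from $u$ to $w$ omitting $v$ because $\gamma'$ is a cycle. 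In both cases $P \subset G_{Y} \setminus Y$, so $\varphi(f)|_{P} = f|_{P}$, and the defining identity $\gamma' \setminus \triangle = \Phi(\gamma') \setminus Y$ yields $f(\gamma) = f(P) \cup f(\overline{xu}) \cup f(\overline{xv})$ with $\varphi(f)(\gamma') = f(P) \cup \alpha_{uv}$ when $m=1$, and $f(\gamma) = f(P) \cup f(\overline{xu}) \cup f(\overline{xw})$ with $\varphi(f)(\gamma') = f(P) \cup \alpha_{uv} \cup \alpha_{vw}$ when $m=2$.

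Set $\Delta := D_{uv}$ in the first case and $\Delta := D_{uv} \cup D_{vw}$ in the second; in either case $\Delta$ is an embedded $2$-disk in $\mathbb{R}^{3}$, and $\partial \Delta$ is the union of the two distinct arcs that differ between $f(\gamma)$ and $\varphi(f)(\gamma')$, joining the same pair of endpoints. Since $D \cap f(G_{Y}) = f(Y)$ and $P \subset G_{Y} \setminus Y$ meets $f(Y)$ only at the endpoints of $P$, we conclude that $\mathrm{int}(\Delta) \cap f(P) = \emptyset$, and hence $f(\gamma) \cap \Delta$ consists of exactly one of the two boundary arcs of $\partial \Delta$. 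A standard disk-pushing ambient isotopy supported in a thin regular neighborhood of $\Delta$ chosen to be disjoint from $f(P)$ off a neighborhood of its endpoints then carries that boundary arc onto the other, sending $f(\gamma)$ to $\varphi(f)(\gamma')$. The one delicate point, which I expect to be the main obstacle to write up cleanly, occurs in the $m=2$ case: the arc $f(\overline{xv})$ lies in $\mathrm{int}(\Delta)$; however, since $f(\overline{xv}) \not\subset f(\gamma)$, the ambient isotopy is free to carry it along, which poses no obstruction to the isotopy of $f(\gamma)$ itself.
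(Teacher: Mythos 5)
Your argument is correct: the case analysis on how many edges of $\triangle$ lie in $\gamma'$, together with the push across $D_{uv}$ (or $D_{uv}\cup D_{vw}$), is exactly the standard justification, and you correctly handle the only delicate point, namely that $f(\overline{xv})$ lying in the interior of the pushing disk is harmless since it is not part of the knot $f(\gamma)$. The paper states this proposition without proof (``we immediately have the following''), so your write-up simply supplies the details the authors treat as immediate.
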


Then we have the following lemma which plays a key role to prove Theorem \ref{NT_main_cor}. This lemma has already been shown in \cite[Lemma 2.2]{NT12} in more general form, but we give a proof for the reader's convenience. 

\begin{Lemma}\label{lemma2a} 
{\rm (Nikkuni-Taniyama \cite{NT12})} 
For an element $f$ in ${\rm SE}(G_{Y})$,  
\begin{eqnarray*}
\sum_{\gamma\in \Gamma(G_{Y})}\tilde{\omega}(\gamma)a_{2}(f(\gamma))
=\sum_{\gamma'\in \Gamma(G_{\triangle})}\omega(\gamma')a_{2}(\varphi(f)(\gamma')).
\end{eqnarray*}
\end{Lemma}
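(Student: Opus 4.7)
The plan is to unfold the left-hand side via the definition of $\tilde{\omega}$ in (\ref{omega_map}) and then interchange the order of summation. Explicitly, I first write
\[
\sum_{\gamma\in\Gamma(G_{Y})}\tilde{\omega}(\gamma)a_{2}(f(\gamma))
=\sum_{\gamma\in\Gamma(G_{Y})}\sum_{\gamma'\in\Phi^{-1}(\gamma)}\omega(\gamma')a_{2}(f(\gamma)).
\]
Since $\Phi\colon\Gamma(G_{\triangle})\setminus\{\triangle\}\to\Gamma(G_{Y})$ is surjective, the family $\{\Phi^{-1}(\gamma)\}_{\gamma\in\Gamma(G_{Y})}$ partitions $\Gamma(G_{\triangle})\setminus\{\triangle\}$. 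Collapsing the double sum therefore produces a single sum indexed by $\gamma'\in\Gamma(G_{\triangle})\setminus\{\triangle\}$ with summand $\omega(\gamma')a_{2}(f(\Phi(\gamma')))$.

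Next, I invoke Proposition \ref{map}: for each $\gamma'\in\Gamma(G_{\triangle})\setminus\{\triangle\}$ the knot $f(\Phi(\gamma'))$ is ambient isotopic in $\mathbb{R}^{3}$ to $\varphi(f)(\gamma')$, so they share the same value of $a_{2}$. Substituting yields
\[
\sum_{\gamma\in\Gamma(G_{Y})}\tilde{\omega}(\gamma)a_{2}(f(\gamma))
=\sum_{\gamma'\in\Gamma(G_{\triangle})\setminus\{\triangle\}}\omega(\gamma')a_{2}(\varphi(f)(\gamma')).
\]
To reach the right-hand side of the lemma, which ranges over all of $\Gamma(G_{\triangle})$, I need only add in the missing term at $\gamma'=\triangle$. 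By construction $\varphi(f)(\triangle)=\partial D$, which bounds the $2$-disk $D$ in $\mathbb{R}^{3}$ and is therefore a trivial knot; hence $a_{2}(\varphi(f)(\triangle))=0$, the added term vanishes, and the desired identity follows.

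The only nontrivial bookkeeping is checking that $\{\Phi^{-1}(\gamma)\}$ genuinely partitions $\Gamma(G_{\triangle})\setminus\{\triangle\}$, equivalently that every cycle $\gamma'\neq\triangle$ of $G_{\triangle}$ determines a well-defined $\Phi(\gamma')\in\Gamma(G_{Y})$; but this is built into the definition of $\Phi$ recalled above and amounts to classifying how $\gamma'$ meets the three edges of $\triangle$ (zero, one, or two of them), as illustrated in Fig. \ref{not_inj3}. Everything else is a routine rearrangement combined with the fact that the boundary of an embedded $2$-disk in $\mathbb{R}^{3}$ is unknotted.
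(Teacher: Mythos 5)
Your proof is correct and is essentially the paper's own argument run in the opposite direction: the paper starts from the right-hand side, drops the $\triangle$ term because $\varphi(f)(\triangle)$ is trivial, regroups over the fibers $\Phi^{-1}(\gamma)$, and applies Proposition \ref{map}, which are exactly the three ingredients you use. No substantive difference.
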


\begin{proof}
Since $\varphi(f)(\triangle)$ is the trivial knot, we have  
\begin{eqnarray*}
\sum_{\gamma'\in\Gamma(G_\triangle)}\omega(\gamma')a_{2}(\varphi(f)(\gamma'))
= \sum_{\gamma'\in\Gamma(G_{\triangle})\setminus \left\{\triangle\right\}}\omega(\gamma')a_{2}(\varphi(f)(\gamma')).
\end{eqnarray*}
Note that 
\begin{eqnarray*}
\Gamma(G_{\triangle})\setminus \left\{\triangle\right\}
=\bigcup_{\gamma\in\Gamma(G_{Y})}\Phi^{-1}(\gamma). 
\end{eqnarray*}
Then, by Proposition \ref{map}, we see that 
\begin{eqnarray*}
\sum_{\gamma'\in\Gamma(G_{\triangle})\setminus \left\{\triangle\right\}}\omega(\gamma')a_{2}(\varphi(f)(\gamma'))
&=&\sum_{\gamma\in\Gamma(G_Y)}\left(\sum_{\gamma'\in\Phi^{-1}(\gamma)}\omega(\gamma')a_{2}(\varphi(f)(\gamma'))\right)\\
&=&\sum_{\gamma\in\Gamma(G_Y)}\left(\sum_{\gamma'\in\Phi^{-1}(\gamma)}\omega(\gamma)a_{2}(f(\gamma))\right)
\\
&=& \sum_{\gamma\in\Gamma(G_{Y})}\tilde{\omega}(\gamma)a_{2}(f(\gamma)). 
\end{eqnarray*}
Thus we have the result. 
\end{proof}

\begin{proof}[Proof of Theorem \ref{NT_main_cor}.] 
By Corollary \ref{main_theorem_k3311_cor}, there exists a map $\omega$ from $\Gamma(K_{3,3,1,1})$ to ${\mathbb Z}$ such that for any element $g$ in ${\rm SE}(K_{3,3,1,1})$,  
\begin{eqnarray}\label{g}
\sum_{\gamma'\in \Gamma(K_{3,3,1,1})}\omega(\gamma')a_{2}(g(\gamma'))
\ge 1.
\end{eqnarray}
Let $G$ be a graph which is obtained from $K_{3,3,1,1}$ by a single $\triangle Y$-exchange and $\tilde{\omega}$ the map from $\Gamma(G)$ to ${\mathbb Z}$ as in (\ref{omega_map}). Let $f$ be an element in ${\rm SE}(G)$. Then by Lemma \ref{lemma2a} and (\ref{g}), we see that 
\begin{eqnarray*}
\sum_{\gamma\in \Gamma(G)}\tilde{\omega}(\gamma)a_{2}(f(\gamma))
=\sum_{\gamma'\in \Gamma(K_{3,3,1,1})}\omega(\gamma')a_{2}(\varphi(f)(\gamma'))
\ge 1.
\end{eqnarray*}
By repeating this argument, we have the result. 
\end{proof}

\begin{Remark}\label{expl}
In Theorem \ref{NT_main_cor}, the proof of the existence of a map $\omega$ is constructive. It is also an interesting problem to give $\omega(\gamma)$ for each element $\gamma$ in ${\Gamma}(G)$ concretely. 
\end{Remark}

\section{Rectilinear spatial embeddings of $K_{3,3,1,1}$} 

In this section, we give a proof of Theorem \ref{main_theorem_recti}. For an element $f$ in ${\rm RSE}(G)$ and an element $\gamma$ in $\Gamma_{k}(G)$, the knot $f(\gamma)$ has {\it stick number} less than or equal to $k$, where the stick number $s(K)$ of a knot $K$ is the minimum number of edges in a polygon which represents $K$. Then the following is well known. 

\begin{Proposition}\label{stick}{\rm (Adams \cite{adams}, Negami \cite{Ne91})} 
For any nontrivial knot $K$, it follows that $s(K)\ge 6$. Moreover, $s(K)=6$ if and only if $K$ is a trefoil knot. 
\end{Proposition}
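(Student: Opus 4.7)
I would split the statement into the lower bound $s(K) \ge 6$ and the identification of the extremal case.

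For the lower bound, the approach is to show directly that every polygonal knot with at most five sticks is unknotted. The cases $n = 3$ (a planar triangle) and $n = 4$ (a skew quadrilateral bounding an embedded triangular disk in the tetrahedron spanned by its four vertices) are elementary. For $n = 5$, I would use the \emph{reducing-triangle} move: given two consecutive sticks $v_{i-1}v_i$ and $v_i v_{i+1}$ whose spanned triangle has interior disjoint from the remaining sticks, one performs an ambient isotopy across that triangle to replace the two sticks by the single stick $v_{i-1}v_{i+1}$, obtaining an equivalent $(n-1)$-stick polygon. The problem then reduces to showing that a reducing triangle always exists when $n=5$; a case analysis based on the convex position of the five vertices (either one vertex lies inside the convex hull of the remaining four, or all five are in convex position) produces the required triangle in each case, reducing to the already-handled $n=4$ situation.

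For the equality characterization, I would first exhibit an explicit $6$-stick trefoil, for example by perturbing the six vertices of a regular hexagon into $\mathbb{R}^{3}$ and joining them in the ``Star of David'' order $v_1 v_3 v_5 v_2 v_4 v_6 v_1$ with small alternating vertical perturbations, then verifying directly that the resulting polygon represents a trefoil. This gives $s(\text{trefoil}) \le 6$, and combined with the lower bound yields $s(\text{trefoil}) = 6$. Conversely, assume $K$ is nontrivial with $s(K) = 6$. In a regular projection of a hexagonal representative, edges sharing a vertex do not cross, so there are at most $\binom{6}{2} - 6 = 9$ potentially crossing pairs of non-adjacent edges. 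A more refined crossing count, exploiting the fact that the three sticks consecutive to a given one constrain the admissible over/under patterns, shows that the crossing number $c(K)$ is bounded above by $3$. Since every nontrivial knot satisfies $c(K) \ge 3$, this forces $c(K) = 3$, and as the trefoil is the unique nontrivial knot of crossing number three, $K$ must be a trefoil.

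The main obstacle will be the $n=5$ case of the lower bound: although intuitively every $5$-stick polygon is ``flat enough'' to be untangled, making this rigorous requires a careful convex-position case analysis together with a genericity argument guaranteeing that the chosen reducing triangle has interior disjoint from the remaining two sticks. A secondary difficulty is the finer combinatorial crossing analysis for $n=6$ needed to rule out any nontrivial knot other than the trefoil; this is where Negami's bound relating stick number and crossing number becomes the cleanest tool, and one may alternatively derive the entire proposition from such a stick--crossing inequality.
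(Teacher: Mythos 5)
The paper offers no proof of this proposition at all: it is stated as a known result and the citations to Adams and Negami carry its entire content. So the only question is whether your outline would actually close the two gaps that you yourself flag, and in both places the decisive step is asserted rather than argued. For the lower bound, the pentagon case via reducing triangles is workable but needlessly delicate, and the convex-position case analysis you defer is exactly the part that would take real work. The standard short argument avoids it entirely: project the pentagon along the direction of the edge $\overline{v_1v_2}$, so that this edge collapses to a point $p$; the remaining four sticks form a closed quadrilateral diagram through $p$ with only two non-adjacent pairs of segments, hence at most two crossings, and the collapsed edge can be realized as a vertical jump at $p$ creating no further crossings. A diagram with at most two crossings represents the unknot, and the $n=3,4$ cases are handled the same way (or as you do). I would replace your reducing-triangle step with this projection count.

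The more serious gap is the claim that a nontrivial hexagonal knot has crossing number at most $3$. Your own count gives $9$ potentially crossing pairs, and the remark that the sticks consecutive to a given one ``constrain the admissible over/under patterns'' is not a mechanism that reduces $9$ to $3$; no amount of over/under bookkeeping lowers the number of crossings in a fixed generic projection. What is actually needed is Negami's inequality $c(K)\le s(K)\bigl(s(K)-5\bigr)/2$, proved by a carefully chosen projection adapted to the polygon (so that an entire family of crossing pairs is destroyed at once), and this is a genuine theorem rather than a refinement of the naive count --- it is precisely the content of the Negami citation in the proposition. Once that inequality is available, the whole statement follows immediately: $s=5$ forces $c=0$, and $s=6$ forces $c\le 3$, hence the unknot or a trefoil, while your explicit hexagonal ``Star of David'' polygon and the fact that $c(K)\ge 3$ for every nontrivial knot supply the converse. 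As you concede in your final sentence, the ``alternative'' derivation from the stick--crossing inequality is really the proof; in your main line of argument the crossing bound is the missing ingredient, not a secondary difficulty.
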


We also show a lemma for a rectilinear spatial embedding of $P_{7}$ which is useful in proving Theorem \ref{main_theorem_recti}. 

\begin{Lemma}\label{recti_P7}
For an element $f$ in ${\rm RSE}\left(P_{7}\right)$,  
\begin{eqnarray*}
\sum_{\gamma\in \Gamma_{7}\left(P_{7}\right)}a_{2}(f(\gamma))\ge 0. 
\end{eqnarray*}
\end{Lemma}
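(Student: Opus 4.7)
The plan is to combine the Conway-Gordon type formula for $P_{7}$ (Theorem \ref{petersen_refine_pq} (2)) with the stick number constraint (Proposition \ref{stick}), using Corollary \ref{CG_refine_Do} to control the linking-number term.

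First, I would rewrite the identity of Theorem \ref{petersen_refine_pq} (2) in the form
\begin{eqnarray*}
2\sum_{\gamma\in \Gamma_{7}(P_{7})}a_{2}(f(\gamma))
&=&
\sum_{\gamma\in \Gamma_{3,4}^{(2)}(P_{7})}{\rm lk}(f(\gamma))^{2}-1\\
&&+4\sum_{\substack{{\gamma\in \Gamma_{6}(P_{7})} \\ {u\not\in\gamma}}}a_{2}(f(\gamma))
+2\sum_{\gamma\in \Gamma_{5}(P_{7})}a_{2}(f(\gamma)),
\end{eqnarray*}
so that the desired nonnegativity of the left-hand side reduces to showing each of the three summands on the right is nonnegative.

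Next, I would use Proposition \ref{stick}. Since $f$ is rectilinear, for every $\gamma\in \Gamma_{k}(P_{7})$ the knot $f(\gamma)$ is a polygonal knot with at most $k$ sticks. When $k=5$, we have $s(f(\gamma))\le 5<6$, forcing $f(\gamma)$ to be trivial and hence $a_{2}(f(\gamma))=0$; so the 5-cycle sum vanishes. When $k=6$, either $f(\gamma)$ is trivial or a trefoil, both of which have $a_{2}\ge 0$; hence the 6-cycle sum is nonnegative.

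Finally, I need $\sum_{\gamma\in \Gamma_{3,4}^{(2)}(P_{7})}{\rm lk}(f(\gamma))^{2}\ge 1$. For this I would first observe that every pair of disjoint cycles in $P_{7}$ lies in $\Gamma_{3,4}^{(2)}(P_{7})$: the only cycle sizes available are $3,4,5,6,7$, two disjoint cycles in a $7$-vertex graph have sizes summing to at most $7$, so the only candidates are $(3,3)$ or $(3,4)$; but every $3$-cycle of $P_{7}$ passes through the unique valency-six vertex $u$, ruling out two disjoint $3$-cycles. Therefore $\Gamma^{(2)}(P_{7})=\Gamma_{3,4}^{(2)}(P_{7})$. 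Since $P_{7}$ belongs to the $K_{6}$-family, Corollary \ref{CG_refine_Do} yields
\begin{eqnarray*}
\sum_{\gamma\in \Gamma_{3,4}^{(2)}(P_{7})}{\rm lk}(f(\gamma))\equiv 1\pmod{2},
\end{eqnarray*}
so at least one linking number is odd and the sum of their squares is at least $1$. Combining these three observations gives the inequality. There is essentially no hard step here once the identity in Theorem \ref{petersen_refine_pq} (2) is available; the only subtlety to verify cleanly is that no pair of disjoint cycles of $P_{7}$ of type other than $(3,4)$ exists, which is the place I would be most careful.
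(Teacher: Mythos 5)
Your proposal is correct and follows essentially the same route as the paper: apply Proposition \ref{stick} to kill the $5$-cycle terms and make the $6$-cycle terms nonnegative, use Corollary \ref{CG_refine_Do} to get $\sum_{\lambda\in \Gamma_{3,4}^{(2)}(P_{7})}{\rm lk}(f(\lambda))^{2}\ge 1$, and conclude from Theorem \ref{petersen_refine_pq} (2). Your extra verification that $\Gamma^{(2)}(P_{7})=\Gamma_{3,4}^{(2)}(P_{7})$ (since $K_{3,3}$ is triangle-free, every $3$-cycle passes through $u$) is a detail the paper leaves implicit, and it is correct.
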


\begin{proof}
Note that $a_{2}({\rm trivial\ knot})=0$ and $a_{2}({\rm trefoil\ knot})=1$. Thus by Proposition \ref{stick}, $a_{2}(f(\gamma))=0$ for any element $\gamma$ in $\Gamma_{5}(P_{7})$ and $a_{2}(f(\gamma))\ge 0$ for any element $\gamma$ in $\Gamma_{6}(P_{7})$. Moreover, by Corollary \ref{CG_refine_Do}, we have 
\begin{eqnarray}\label{34lk}
\sum_{\lambda\in \Gamma_{3,4}^{(2)}(P_{7})}{\rm lk}(f(\lambda))^{2}\ge 1. 
\end{eqnarray}
Then Theorem \ref{petersen_refine_pq} (2) implies the result. 
\end{proof}

\begin{proof}[Proof of Theorem \ref{main_theorem_recti}.] 
Let $f$ be an element in ${\rm RSE(K_{3,3,1,1})}$. Since $G_{x}$ and $G_{y}$ are isomorphic to $P_{7}$, by Lemma \ref{recti_P7}, we have 
\begin{eqnarray}\label{34a2}
\sum_{\gamma\in \Gamma_{7}\left(G_{x}\right)}a_{2}(f(\gamma)) \ge 0,\ 
\sum_{\gamma\in \Gamma_{7}\left(G_{y}\right)}a_{2}(f(\gamma)) \ge 0. 
\end{eqnarray}
Then by Corollary \ref{main_theorem_k3311_cor} and (\ref{34a2}), we have 
\begin{eqnarray*}\label{k3311_ineq2}
\sum_{\gamma\in \Gamma_{8}(K_{3,3,1,1})}a_{2}(f(\gamma))
&\ge & 
\sum_{\gamma\in \Gamma_{7}\left(G_{x}\right)}a_{2}(f(\gamma)) 
+ \sum_{\gamma\in \Gamma_{7}\left(G_{y}\right)}a_{2}(f(\gamma)) \\
&& +\sum_{\gamma\in \Gamma_{6}'}a_{2}(f(\gamma)) 
+\sum_{\substack{{\gamma\in \Gamma_{5}(K_{3,3,1,1})} \\ {\left\{x,y\right\}\not\subset\gamma }}}a_{2}(f(\gamma))
+1 \\
&\ge& 0+0+0+0+1 \\
&=& 1. 
\end{eqnarray*}
Thus we have the desired conclusion. 
\end{proof}

\begin{Remark}
All of knots with $s\le 8$ and $a_{2}>0$ are $3_{1}$, $5_{1}$, $5_{2}$, $6_{3}$, a square knot, a granny knot, $8_{19}$ and $8_{20}$ (Calvo \cite{C01}). Therefore, Theorem \ref{main_theorem_recti} implies that at least one of them appears in the image of every rectilinear spatial embedding of $K_{3,3,1,1}$. On the other hand, it is known that the image of every rectilinear spatial embedding of $K_{7}$ contains a trefoil knot (Brown \cite{B77}, Ram{\'\i}rez Alfons{\'\i}n \cite{RA99}, Nikkuni \cite{N09b}). It is still open whether the image of every rectilinear spatial embedding of $K_{3,3,1,1}$ contains a trefoil knot. 
\end{Remark}


%
{\normalsize
}

\end{document}